\documentclass[12pt]{article}
\usepackage{amsmath,amsthm,amssymb,mathtools}
\usepackage{booktabs,tabularx,array}
\mathtoolsset{showonlyrefs}
\usepackage[colorlinks,linkcolor=red,anchorcolor=blue,citecolor=magenta]{hyperref}
\usepackage[numbers,sort&compress]{natbib}
\newtheorem{theorem}{Theorem}[section]
\newtheorem{lemma}[theorem]{Lemma}
\newtheorem{proposition}[theorem]{Proposition}

\theoremstyle{definition}

\theoremstyle{plain}
\newtheorem{remark}[theorem]{Remark}
\numberwithin{equation}{section}
\newcommand{\R}{\mathbb R}
\newcommand{\Sph}{\mathbb S}

\begin{document}

\title{Normalized solutions to critical and supercritical Schr\"odinger equations in a ball}
\author{ Jun Wang$^{a}$\footnote {Corresponding author. wangj937@mail2.sysu.edu.cn (J. Wang), mcsyzy@mail.sysu.edu.cn (Z. Yin)}, Zhaoyang Yin$^{a, b}$ \\
{\small $^{a}$Department of Mathematics, Sun Yat-sen University, Guangzhou, 510275, China } \\
{\small $^{b}$School of Science, Shenzhen Campus of Sun Yat-sen University, Shenzhen, 518107, China } \\
}

\date{}
\maketitle

\begin{abstract}
In this paper, we study radial normalized solutions to nonlinear Schr\"odinger equations in the unit ball
\[
 -\Delta u+\lambda u=|u|^{p-1}u,
 \qquad \int_{B_1}|u|^2\,dx=\rho,
\]
where $\lambda$ is unknown and $p\ge p_c:=\frac{N+2}{N-2}$.  Soave
\cite{Soave} proved that for every $1<p<p_c$ every positive mass is
attained by infinitely many radial solutions and explicitly raised the question
of whether every mass is attained in the Sobolev critical and supercritical
regimes.  We answer this question negatively in the radial class: for every
$p\ge p_c$, the set of masses attained by nontrivial radial solutions is an
interval with finite positive supremum.
In particular, at $p=p_c$ the set of attainable masses is
$(0,\rho^*_{\rm rad})$ or $(0,\rho^*_{\rm rad}]$, with
$0<\rho^*_{\rm rad}<\infty$, and the same finite threshold alternative
holds for every $p>p_c$.  Thus, in the radial class, the subcritical all masses phenomenon fails
throughout the full Sobolev critical and supercritical range.
\end{abstract}

{\footnotesize {\bf   Keywords:} Normalized solutions, Schr\"odinger equation, bounded domains.

{\bf 2020 MSC:}  35J61, 35Q55, 35B05.
}
\section{Introduction and main results}\label{sec:intro}

Let $B_1\subset\R^N$, $N\geq3$, be the unit ball.  We study the normalized radial solutions of
\begin{equation}\label{eq:problem}
 \begin{cases}
 -\Delta u+\lambda u=|u|^{2^*-2}u&\text{in }B_1,\\
 u=0&\text{on }\partial B_1,\\
 \displaystyle\int_{B_1}|u|^2\,dx=\rho,
 \end{cases}
 \qquad 2^*=\frac{2N}{N-2}.
\end{equation}
We write
\[
 p_c:=2^*-1=\frac{N+2}{N-2}
\]
for the Sobolev critical power.
Throughout the paper, by a radial solution we mean a classical radial function
$u(x)=U(|x|)$ with
\[
 u\in C^2(B_1)\cap C^1(\overline{B_1})\cap H_0^1(B_1).
\]
The multiplier $\lambda$ is part of the unknown.  This is the normalized
version of the critical Brezis--Nirenberg equation.  The fixed frequency
problem has a very rich theory beginning with \cite{BN}; the prescribed mass
problem is substantially more delicate because the mass is not fixed by the
usual fixed frequency variational construction.  For recent comparisons
between the action and energy viewpoints, see \cite{DST}, and for a nodal
and least energy normalized framework see \cite{CDGS}.

Recent progress on the critical normalized problem has concentrated on the
existence of positive solutions in small mass regimes.  The authors in \cite{PVY} obtained mountain pass positive solutions on
general bounded domains for a positive measure set of small masses, and on
star shaped domains for the whole corresponding small mass interval.  Song
and Zou \cite{SongZou} established, on star shaped domains and under an
explicit small mass condition, two positive normalized solutions, one of
ground state type and one of mountain pass type.  More recently, Chang et al.\ \cite{ChangLiuYan} proved the existence of a second positive
mountain pass solution for every sufficiently small mass on a general smooth
bounded domain.  In a complementary concentration direction, Lv, Zeng and
Zhou \cite{LvZengZhou} constructed positive multi-spike solutions with
prescribed small mass in dimensions $N\geq6$.
For the variational background in the $L^2$-critical and supercritical
regimes on bounded domains, see also \cite{PV} and \cite{NTV}.
There is also a useful radial ODE background: estimates for oscillatory
radial Lane--Emden solutions were obtained by Kajikiya \cite{Kajikiya},
while the Morse indices of radial nodal Lane--Emden solutions were computed
in \cite{DMP1,DMP2}.  These works concern different quantities from the
normalized mass, but they underline why a uniform estimate over all nodal
branches is a separate issue.  For a complementary radial prescribed-mass
problem on annuli, see Liang and Song \cite{LiangSong}.

For clarity, Table~\ref{tab:comparison} separates the available results by
power range, mass range, and symmetry class.  The last two rows are the new
point of this paper: they give a finite upper threshold for all
nontrivial radial solutions, including sign-changing solutions with arbitrary
nodal number.
\begin{table}[htbp]
\centering
\caption{Normalized solutions in the ball: comparison with the present work.}
\label{tab:comparison}
\renewcommand{\arraystretch}{1.16}
\small
\begin{tabularx}{\textwidth}{@{}>{\centering\arraybackslash}p{0.16\textwidth}>{\centering\arraybackslash}p{0.28\textwidth}>{\centering\arraybackslash}p{0.17\textwidth}X@{}}
\toprule
Power range $p$ & Mass range & Radial? & Reference \\
\midrule
$1<p<p_c$ & $(0,\infty)$ & Yes & Soave~\cite{Soave} \\
$p=p_c$ &  Small masses; star-shaped domains & Not assumed & Pierotti--Verzini--Yu~\cite{PVY} \\
$p=p_c$ & Sufficiently small masses; positive solutions & Not assumed & Song--Zou~\cite{SongZou}; Chang--Liu--Yan~\cite{ChangLiuYan}; Lv--Zeng--Zhou~\cite{LvZengZhou} \\
\midrule
$p=p_c$ & $(0,\rho^*_{\rm rad})$ or $(0,\rho^*_{\rm rad}]$, $0<\rho^*_{\rm rad}<\infty$ & Yes; all nodal classes & \textbf{This paper} \\
$p>p_c$ & $(0,\rho^*_{\rm rad}(p))$ or $(0,\rho^*_{\rm rad}(p)]$, $0<\rho^*_{\rm rad}(p)<\infty$ & Yes; all nodal classes & \textbf{This paper} \\
\bottomrule
\end{tabularx}
\end{table}

These results do not describe the complete set of attainable radial masses.  In
particular, they are restricted to positive solutions and small masses, and
do not provide a mass bound uniform over all radial nodal branches.  The
present paper concerns precisely this latter question in the ball: it allows
arbitrary nodal number and identifies a finite threshold for the whole
radial attainable mass set, both at and above the Sobolev exponent. For subcritical powers, radial shooting can produce normalized solutions of
any prescribed positive mass in a ball; see, in particular, the recent work
of Soave \cite{Soave}.  In Remark~1.2(i) of that work, Soave explicitly asks
whether every mass is attained in the Sobolev critical and supercritical
regimes.  We show that, within the radial class, the answer is negative already
at the endpoint $p=2^*-1$ and remains negative throughout the strictly
supercritical range.

The issue is not merely to produce a solution on one prescribed radial
branch.  To decide whether arbitrarily large masses are attainable in the
radial class, one has to control simultaneously the branches with an
unbounded number of nodal regions and a free Lagrange multiplier.  The
positive small mass theories quoted above do not address this regime.  Nor do
fixed frequency descriptions of individual nodal solutions directly give a
uniform bound for their $L^2$ masses after critical normalization.  At the
Sobolev exponent, the usual subcritical Emden--Fowler scaling loses the
strict power gain which is used to generate large masses in the subcritical
case.  Thus neither a fixed node limit nor a pointwise energy estimate alone
settles the question.

We use the critical scaling to identify the right quantity to estimate.
After shooting and the Fowler transformation, let $H$ be the non-autonomous
energy and let $\eta=\gamma e^{2t}$ be the increasing coefficient of the
linear term.  At every Dirichlet zero, the radial Pohozaev identity gives
the exact formula
\[
 M=|\Sph^{N-1}|\frac{H}{\eta}.
\]
Although $H$ is not conserved, its non-autonomous variation has the simple
form
\[
 \frac{dH}{d\eta}=\frac12w^2.
\]
The Fowler potential admits a uniform coercive lower bound for bounded $\eta$, while for
large $\eta$ its positive quadratic part controls $w^2$ directly.  These two
regimes yield $H\lesssim\eta$, uniformly over the shooting parameter and
over all nodal branches.  This converts the infinite branch mass problem
into a one dimensional quotient estimate.  Finally, continuity of every
shooting mass branch and its Bessel limit as $\gamma\to\infty$ turn the
upper bound into the interval description in
Theorem~\ref{thm:spectrum}.
\begin{theorem}\label{thm:main}
For every $N\geq3$ there exists a constant $\rho_N\in(0,\infty)$ such that
every nontrivial radial solution $(u,\lambda)$ of \eqref{eq:problem}
satisfies
\begin{equation}\label{eq:massbound}
 \int_{B_1}|u|^2\,dx\leq\rho_N.
\end{equation}
Consequently, if $\rho>\rho_N$, problem \eqref{eq:problem} admits no radial
solution.
\end{theorem}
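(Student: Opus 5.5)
The plan is to reduce everything to one radial ODE and prove a single quotient estimate that is uniform over all branches. A preliminary observation is that $\lambda>0$ for every nontrivial radial solution. Indeed, the Pohozaev identity for \eqref{eq:problem} on $B_1$, with critical nonlinearity, reads
\[
\lambda\int_{B_1}u^2\,dx=\tfrac12\int_{\partial B_1}|\partial_\nu u|^2\,dS,
\]
and the right-hand side is positive by Hopf's lemma, or by ODE uniqueness if $U'(1)=0$.

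\textbf{Shooting and Fowler variables.} Let $U(r)$ solve $U''+\frac{N-1}{r}U'-\lambda U+|U|^{2^*-2}U=0$ with $U(0)=\gamma>0$ (replacing $u$ by $-u$ if needed). The critical scaling $U(r)=\gamma\,W(\gamma^{2/(N-2)}r)$ normalizes $W(0)=1$ and replaces $\lambda$ by $\lambda\gamma^{-4/(N-2)}$. The Dirichlet condition then becomes the statement that some zero of $W$ sits at $s=\gamma^{2/(N-2)}$. Since the $L^2$ norm transforms as $\int_{B_1}u^2=\gamma^{-4/(N-2)}\int_{B_R}W^2$ with $R=\gamma^{2/(N-2)}$, it suffices to bound the mass in these normalized variables. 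Next I apply the Fowler transformation $W(s)=s^{-(N-2)/2}w(t)$, $t=\log s$, which yields
\[
w''=\Big(\tfrac{N-2}{2}\Big)^2w+\eta(t)\,w-|w|^{2^*-2}w,
\]
where $\eta(t)$ is the increasing linear coefficient $\gamma e^{2t}$ in the paper's notation (a fixed positive multiple of $e^{2t}$ after rescaling). I define the non-autonomous energy
\[
H=-\tfrac12 w'^2+\tfrac12\Big(\tfrac{N-2}{2}\Big)^2w^2+\tfrac12\eta w^2-\tfrac1{2^*}|w|^{2^*}.
\]
The sign is chosen so that the equation gives $dH/dt=\tfrac12\eta'w^2$, that is, $dH/d\eta=\tfrac12 w^2$. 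As $t\to-\infty$ we have $w\sim e^{(N-2)t/2}$, so $H\to0$.

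\textbf{Pohozaev in Fowler form.} Integrating the radial Pohozaev identity up to a zero $T$ of $w$, and rewriting both $\int W^2$ and the boundary term $W'(e^T)^2e^{NT}$ in $w$-variables, I expect the exact formula
\[
M=|\Sph^{N-1}|\,\frac{H(T)}{\eta(T)}
\]
quoted in the introduction, where $M$ is the mass in the original variables. This formula holds at every Dirichlet zero, so it covers all nodal branches at once. The theorem then reduces to the quotient estimate
\[
\sup_{t}\frac{H(t)}{\eta(t)}\le C_N,
\]
uniformly in $\gamma$ and $t$. I would check the bookkeeping of constants carefully at this step.

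\textbf{Main step: bounding $H/\eta$.} Set $V_\eta(w)=\tfrac12(\tfrac{N-2}{2})^2w^2+\tfrac12\eta w^2-\tfrac1{2^*}|w|^{2^*}$. The argument has three parts.
\begin{itemize}
\item \emph{Bounded $\eta$.} For $\eta\le\eta_0$, one has $H\le V_\eta(w)\le\max_w V_\eta$, which is bounded by a constant. Since $dH/d\eta\ge0$ and $H(-\infty)=0$, we also get $H\ge0$. Conversely, $H\le V_\eta(w)$ together with the coercive lower bound of the potential forces $w^2\le C(1+H)$.
\item \emph{Large $\eta$.} The positive quadratic term $\tfrac12\eta w^2$ dominates, and $H\ge V_\eta(w)-\tfrac12 w'^2$. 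What I really need is $w^2\lesssim H/\eta+1$, obtained by comparing $H$ with $\max_w V_\eta\sim\eta^{N/2}$. This alone is too crude, so I would instead use a Gronwall-type argument on $\frac{d}{d\eta}(H/\eta)=\frac{w^2}{2\eta}-\frac{H}{\eta^2}$ along the orbit.
\item \emph{Combining the regimes.} If I can show $w^2\le 2H/\eta+C/\eta$ in an averaged sense over each oscillation, the derivative is at most $C/\eta^2$, which integrates to a bound on $H/\eta$.
\end{itemize}

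\textbf{Expected obstacle.} The hard part is exactly this averaging. Pointwise, $w^2$ can be as large as roughly $\eta^{(N-2)/2}$ near the peaks, where the nonlinearity balances the linear term. I would therefore combine the ODE with a virial identity, $\frac{d}{dt}(ww')=w'^2+\big((\tfrac{N-2}{2})^2+\eta\big)w^2-|w|^{2^*}$, to exchange $\int w^2$ for boundary terms over each nodal interval. The goal of this exchange is to keep the $\frac{w^2}{2\eta}$ contribution comparable to $\frac{H}{\eta^2}$ on average. Once the bound $H\lesssim\eta$ holds uniformly, the Pohozaev formula gives $M\le|\Sph^{N-1}|C_N=:\rho_N$, which proves \eqref{eq:massbound}. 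The nonexistence statement for $\rho>\rho_N$ follows immediately.
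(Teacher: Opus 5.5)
There is a genuine gap, and it starts with a sign error in your first step. Combining the Pohozaev identity with the Nehari identity $A+\lambda M=D$ gives $\lambda\int_{B_1}u^2\,dx=-\frac12\int_{\partial B_1}(\partial_\nu u)^2\,dS$. So every nontrivial solution has $\lambda<0$, not $\lambda>0$.

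Your own formulas already detect this. With your $H$, at a Dirichlet zero $T$ one has $H(T)=-\frac12w'(T)^2<0$, so $M=|\Sph^{N-1}|H(T)/\eta(T)>0$ forces $\eta(T)<0$. Conversely, if $\eta$ really were positive and increasing, your monotonicity argument giving $H\ge0$ would force $w(T)=w'(T)=0$, i.e.\ there would be no Dirichlet zero at all. This is just Pohozaev nonexistence for $\lambda\ge0$.

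With the correct sign, set $\mu=-\lambda>0$ and let $\eta>0$ be the increasing coefficient (the paper's $\gamma e^{2t}$). The Fowler equation is $w''+(\eta-\alpha^2)w+|w|^{2^*-2}w=0$ with $\alpha=\frac{N-2}{2}$, so the frequency term is restoring, not repulsive. If you keep your definitions but insert the true sign of $\lambda$, your $\eta$ and $H$ are exactly the negatives of this $\eta$ and of $H=\frac12w'^2+F_\eta(w)$, where $F_\eta(s)=\frac{\eta-\alpha^2}{2}s^2+\frac1{2^*}|s|^{2^*}$. That is why your quotient identity $M=|\Sph^{N-1}|H/\eta$ still came out right.

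The consequence is that your main step analyses $w''=(\alpha^2+\eta)w-|w|^{2^*-2}w$ as $\eta\to+\infty$, an equation that corresponds to no solution of the problem. There the potential is unbounded below, so the ``coercive lower bound'' you invoke in the bounded-$\eta$ bullet does not exist, and the large-$\eta$ regime is genuinely hard. Your proposal stops at ``if I can show $w^2\le 2H/\eta+C/\eta$ in an averaged sense''. So the uniform bound $H\lesssim\eta$, which is the whole content of the theorem, is never established.

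With the correct sign no averaging or virial identity is needed, because the estimate holds pointwise, in two regimes.
For $0\le\eta\le\eta_0:=1+2\alpha^2$, Young's inequality gives $F_\eta(s)\ge c_N|s|^{2^*}-C_N$, hence $w^2\le C_N(1+H)$. Then $H(\eta)=\frac12\int_0^\eta w^2\,d\xi$ together with Gronwall gives $H\le C\eta$.
For $\eta\ge\eta_0$, $F_\eta(s)\ge\frac{\eta-\alpha^2}{2}s^2$ gives $\frac{dH}{d\eta}=\frac12w^2\le\frac{H}{\eta-\alpha^2}$. This integrates to $H(\eta)\le H(\eta_0)\frac{\eta-\alpha^2}{\eta_0-\alpha^2}\le C\eta$.

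The peaks $w^2\sim\eta^{(N-2)/2}$ you were worried about are an artifact of the wrong sign. In the true equation, $w^2\le 2H/(\eta-\alpha^2)$ is bounded uniformly.
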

\begin{remark}
The regularity assumption in the definition of radial solutions can be weakened.
Indeed, the arguments of this paper only use the one dimensional radial profile
$U$, where $u(x)=U(|x|)$.  It is enough to assume
\[
 U\in C^2((0,1))\cap C^1([0,1]),
 \qquad U'(0)=0,\qquad U(1)=0,
\]
with $U$ satisfying the radial equation on $(0,1)$.  Under these assumptions,
the shooting formulation, the radial Pohozaev identity, and the uniqueness
properties of the corresponding ordinary differential equation remain valid.
We use the stronger classical regularity stated above only for convenience.
\end{remark}
\begin{remark}\label{rem:explicit}
The constant in Theorem~\ref{thm:main} is uniform in the shooting parameter
and in the nodal number, but it is not asserted to be sharp.  In particular,
the theorem gives a radial nonexistence result for every prescribed mass
strictly larger than $\rho_N$.  We do not record a numerical value here,
since this would require tracking the constants in the coercivity estimate
in Proposition~\ref{prop:quotient} quantitatively.
\end{remark}

\begin{theorem}\label{thm:spectrum}
Let $\mathcal R_{\rm rad}$ be the set of all masses attained by nontrivial
radial solutions of \eqref{eq:problem}.  There exists a finite number
$\rho_{\rm rad}^*>0$ such that
\begin{equation}\label{eq:spectrum}
 \mathcal R_{\rm rad}=(0,\rho_{\rm rad}^*)
 \quad\hbox{or}\quad
 \mathcal R_{\rm rad}=(0,\rho_{\rm rad}^*].
\end{equation}
In particular, every $0<\rho<\rho_{\rm rad}^*$ is attained by at least one
radial normalized solution, while no $\rho>\rho_{\rm rad}^*$ is attained by
a radial solution.
\end{theorem}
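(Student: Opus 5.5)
We will take Theorem~\ref{thm:main} as given and reduce the rest to a connectedness argument for the radial shooting problem. For a shooting parameter $\gamma>0$, let $U_\gamma$ denote the solution of the radial Lane--Emden equation $U''+\frac{N-1}{r}U'+|U|^{2^*-2}U=0$ with $U(0)=\gamma$ and $U'(0)=0$. When $N\ge3$ this solution is oscillatory with zeros $0<r_1(\gamma)<r_2(\gamma)<\cdots$. Scaling shows that $U_\gamma(r)=\gamma U_1(\gamma^{2/(N-2)}r)$. Every nontrivial radial solution of \eqref{eq:problem} with $\lambda$ fixed comes, after rescaling to the unit ball, from shooting the equation $-\Delta u+\lambda u=|u|^{2^*-2}u$ with some initial value $\gamma$ and requiring $u(1)=0$. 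Equivalently, we fix the $k$-th nodal branch $\{(\gamma,\lambda_k(\gamma))\}$, where $\lambda_k(\gamma)$ is the value of $\lambda$ for which the $k$-th zero of the shooting solution with initial value $\gamma$ equals $1$. Set $m_k(\gamma)=\int_{B_1}u_{\gamma,\lambda_k(\gamma)}^2\,dx$. Then $\mathcal R_{\rm rad}=\bigcup_k m_k\big((0,\infty)\big)$. We define $\rho^*_{\rm rad}:=\sup\mathcal R_{\rm rad}$. This number is finite by Theorem~\ref{thm:main} and positive because $\mathcal R_{\rm rad}$ is nonempty.

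\textbf{Step 1: continuity of each branch.} We first show that for each $k$ the curve $\gamma\mapsto\lambda_k(\gamma)$ is defined on all of $(0,\infty)$ and is continuous, so that $m_k$ is continuous as well. This would come from the implicit function theorem, using that zeros of a nontrivial ODE solution are simple ($U'\neq0$ at a zero). Monotonicity of zeros in $\lambda$ follows from Sturm comparison, so the $k$-th zero, viewed as a function of $\lambda$, is strictly increasing and continuous. It tends to $0$ or $\infty$ at the ends of the admissible $\lambda$-range, namely $\lambda\to-\infty$ and $\lambda$ up to the value where oscillation is lost. We need care here, because in the critical case with $\lambda\ge0$ the zeros can escape to infinity. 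For that reason we work with the Fowler formulation the paper introduces, in which $\eta=\gamma e^{2t}$ plays the role of $\lambda$ after scaling.

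\textbf{Step 2: limits of $m_k$ at the endpoints.} As $\gamma\to\infty$ we invoke the Bessel limit mentioned in the introduction. After normalizing $v=u/\gamma$, the nonlinear term becomes negligible relative to the linear one, and $\lambda_k(\gamma)\to\mu_k$, where $\mu_k$ is the $k$-th radial Dirichlet eigenvalue of $-\Delta$. Since $v$ converges to the eigenfunction, $m_k(\gamma)\sim\gamma^2\|\phi_k\|_2^2\to$ \ldots; this naive computation would force the mass to blow up. On the other hand, $\lambda_k\to-\mu_k$ corresponds to the small-amplitude end. We therefore orient the branch so that the small-amplitude end is $\gamma\to0$, where $u\to0$ and hence $m_k(\gamma)\to0$. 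Whichever parametrization we use, the requirement is that at one end of each branch the mass tends to $0$. We will prove this for at least the positive branch $k=1$, and in fact for every $k$, from the linear limit at small amplitude: $\|u\|_\infty\to0$ forces $m_k\to0$.

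\textbf{Step 3: connectedness.} Each set $m_k((0,\infty))$ is connected, since $m_k$ is continuous. It is contained in $(0,\infty)$, because nontrivial solutions have positive mass, and its infimum is $0$ by Step 2. Hence it is an interval of the form $(0,s_k)$ or $(0,s_k]$, where $s_k=\sup m_k$. A union of intervals all having left endpoint $0$ is again an interval with left endpoint $0$ and supremum $\sup_k s_k=\rho^*_{\rm rad}$. This yields \eqref{eq:spectrum}. The final assertion then follows from the definition of the supremum together with the interval structure.

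The main obstacle is Step 2: identifying an endpoint of each nodal branch at which the mass tends to $0$, uniformly for the critical exponent and for every $k$. The difficulty is that the critical scaling is mass-neutral only in the sense that matches the Pohozaev formula $M=|\Sph^{N-1}|H/\eta$. We would first try to show directly from that formula that $H/\eta\to0$ along the Bessel (linear) end, using $dH/d\eta=\tfrac12w^2$ together with smallness of $w$ there.
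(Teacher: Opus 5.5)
\textbf{Verdict: the proposal has a genuine gap.} Your architecture matches the paper's: continuity along each nodal branch, a zero-mass end on every branch, and the fact that a union of intervals $(0,s_k)$ or $(0,s_k]$ is again such an interval. But the step that carries the content, the zero-mass end, is not proved, as you acknowledge.

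The statement that $\|u\|_\infty\to0$ forces $m_k\to0$ is true but trivial, since $m_k\le|B_1|\,u(0)^2$. What must be shown is that the $k$-th branch contains solutions of arbitrarily small amplitude. In the normalization $v''+\frac{N-1}{r}v'+gv+|v|^{\frac4{N-2}}v=0$, $v(0)=1$, the solution in $B_1$ built from the $k$-th zero $R_k(g)$ has amplitude $u(0)=R_k(g)^{\frac{N-2}{2}}$. So the missing fact is exactly $R_k(g)\to0$ as $g\to\infty$.

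The paper gets this from the $C^1_{\rm loc}$ convergence of $z_g(s)=v_g(s/\sqrt g)$ to the regular Bessel solution $J$, which gives $\sqrt g\,R_k(g)\to\zeta_k$ (Lemma~\ref{lem:largegamma}). A shorter route is a legitimate Sturm comparison of $(r^{N-1}v')'+r^{N-1}(g+|v|^{\frac4{N-2}})v=0$ with the linear equation solved by $J(\sqrt g\,r)$. There the potentials are ordered, so $R_k(g)<\zeta_k/\sqrt g$; as a bonus this also yields oscillation. With $|v_g|\le1$ from the monotone energy, you then get $M_k(g)\le\frac{|\Sph^{N-1}|}{N}\zeta_k^{N-2}g^{-\frac{N-2}{2}}\to0$.

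Your fallback through $H/\eta$ leads back to the same point. Since $|w|=r^{\alpha}|v|\le R^{\alpha}$ for $r\le R$, one has $H(T)/\eta(T)\le\frac12R^{N-2}$, so ``smallness of $w$'' is again smallness of $R_k$. Also, your heuristic in Step~2 is backwards: on the fixed ball, the nonlinearity dominates as $u(0)\to\infty$, and the linear Bessel regime is the small-amplitude end.

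The branch construction has two further defects.

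\textbf{Monotonicity in $\lambda$ is unjustified.} Sturm comparison does not give monotonicity of the $k$-th zero in $\lambda$ at fixed $u(0)$. Two nonlinear solutions with $\mu_2>\mu_1$ have potentials $\mu_i+|u_i|^{\frac4{N-2}}$ that are not ordered. By scaling, what you would need is strict monotonicity of $g\mapsto R_k(g)$, which you have not established. Without it, $\lambda_k(\gamma)$ may be multivalued, so neither the continuity of $m_k$ nor the identity $\mathcal R_{\rm rad}=\bigcup_k m_k((0,\infty))$ is justified. Moreover, the implicit function theorem with $u'\neq0$ at the zero gives continuity of the zero in $(\gamma,\lambda)$, not solvability for $\lambda$. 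The fix is to parametrize each branch by $g$, as the paper does. Each pair $(g,k)$ gives exactly one solution, and simplicity of zeros gives continuity of $R_k$ and $M_k$ on the connected set $(0,\infty)$, with no monotonicity required.

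\textbf{The oscillation claim is false as stated.} The critical Lane--Emden solution $U_\gamma$ is not oscillatory; it is the positive Aubin--Talenti bubble. The oscillation you actually need is for the equation with $g>0$. That is what guarantees that $R_k(g)$ exists for every $g$ and that $\mathcal R_{\rm rad}\neq\emptyset$, which your positivity of $\rho^*_{\rm rad}$ relies on. The paper proves it with Pr\"ufer variables in Lemma~\ref{lem:criticaloscillation}, or it follows from the Sturm comparison above.
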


\begin{theorem}\label{thm:superradial}
Let $p>\frac{N+2}{N-2}$ and $\mathcal R_{\rm rad}(p)$ denote the set of
the $L^2$ masses of all nontrivial classical radial Dirichlet solutions $(u,\lambda)$,
with $\lambda\in\R$, solving
\begin{equation}\label{eq:superproblem-intro}
 -\Delta u+\lambda u=|u|^{p-1}u\quad\hbox{in }B_1,\qquad
 u|_{\partial B_1}=0.
\end{equation}
Then there exists $0<\rho_{\rm rad}^*(p)<\infty$ such that
\[
 \mathcal R_{\rm rad}(p)=(0,\rho_{\rm rad}^*(p))
 \quad\hbox{or}\quad
 \mathcal R_{\rm rad}(p)=(0,\rho_{\rm rad}^*(p)].
\]
In particular, every mass below $\rho_{\rm rad}^*(p)$ is attained by a
radial normalized solution, while no radial normalized solution exists
above it.
\end{theorem}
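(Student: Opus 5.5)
The proof for $p>p_c$ has two parts: a uniform mass bound over all radial solutions, and an interval (connectedness) argument. It follows the critical case, with Fowler's transformation adapted to the power $p$.

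\textbf{Step 1: normalization and shooting.} Every radial solution is determined by its central value $U(0)=a\neq0$ and by $\lambda$, and by oddness we may take $a>0$. Set $m=2/(p-1)$. Rescale $U(r)=a\,V(a^{1/m}r)$. Then $V$ solves $-V''-\frac{N-1}{s}V'+\mu V=|V|^{p-1}V$ with $V(0)=1$ and $\mu=\lambda a^{-(p-1)}$. The Dirichlet condition says that $a^{1/m}=s_k(\mu)$ is the $k$-th zero of $V(\cdot;\mu)$. Thus every radial solution lies on one of countably many branches, indexed by the nodal number $k$ and parametrized by $\mu$ (equivalently by $\gamma$). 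Along each branch the mass is
\[
M_k(\mu)=a^{2-N/m}\int_{B_{s_k}}V^2=s_k(\mu)^{2m-N}\int_{B_{s_k(\mu)}}V^2\,dx .
\]
Since $p>p_c$ gives $2m-N<2-N+\ldots$, in fact $2m<N-2$, this is a quotient in which the negative power of $s_k$ must beat the growth of $\int V^2$.

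\textbf{Step 2: uniform bound.} Use the Fowler variable $t=\log s$ and $w=s^{m}V$. This gives an equation $\ddot w+(N-2-2m)\dot w-m(N-2-m)w+|w|^{p-1}w=\eta w$, where now $\eta=\mu e^{2t}$. The friction coefficient $N-2-2m$ is positive, which is the difference from the critical case, where it vanishes. Define $H=\frac12\dot w^2+F(w)-\frac12\eta w^2$ with the Fowler potential $F$. The radial Pohozaev identity at a Dirichlet zero again expresses $M$ as a constant times $H/\eta$ up to the friction term, and that term has a favorable sign: $dH/dt=-(N-2-2m)\dot w^2-\frac12\dot\eta w^2$. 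Hence $dH/d\eta\le\frac12w^2$, and the two-regime estimate of Proposition~\ref{prop:quotient} applies verbatim to give $H\lesssim\eta$ uniformly. The regimes are coercivity of $F$ for bounded $\eta$, and the quadratic term controlling $w^2$ for large $\eta$. This yields $\sup\mathcal R_{\rm rad}(p)\le\rho_{N,p}<\infty$.

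I expect this step to be the main obstacle. The danger is that the Pohozaev identity for $p\neq p_c$ carries an extra bulk term $\bigl(\frac{N}{p+1}-\frac{N-2}{2}\bigr)\int|u|^{p+1}$, which is negative in the supercritical range. My first attempt is to absorb it: it enters with the sign that only decreases $\lambda\int u^2$ relative to the boundary term. If the sign check fails, I would instead bound $\int|u|^{p+1}$ by $\int|\nabla u|^2$ through the equation.

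\textbf{Step 3: interval structure.} Each $M_k$ is continuous in $\mu$ by continuous dependence for the ODE and simplicity of zeros. As $\mu\to\infty$ (the Bessel limit), the nonlinearity becomes negligible and $M_k\to0$. Therefore each $M_k$ has range containing $(0,\sup M_k)$, by the intermediate value theorem on a connected parameter domain. Taking the union over $k$, $\mathcal R_{\rm rad}(p)=\bigcup_k M_k(I_k)$ is a union of intervals all accumulating at $0$, hence an interval $(0,\rho^*)$ or $(0,\rho^*]$. Here $\rho^*=\sup_k\sup M_k$ is positive and is finite by Step 2.

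The second delicate point is the domain $I_k$ of $\mu$ on which the $k$-th zero exists. It must be connected, and $M_k$ must not escape to $0$ or $\infty$ at its lower endpoint. For supercritical $p$ with $\mu\le0$, $V$ may have finitely many zeros, so $I_k$ may be a half-line $(\mu_k,\infty)$. On that half-line $s_k\to\infty$ as $\mu\downarrow\mu_k$, and continuity together with the bound still gives the interval conclusion.
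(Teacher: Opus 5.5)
Your Step~2 does not close, and it fails exactly at the sign you flagged. Write $\eta=\gamma e^{2t}$ with $\gamma=-\mu>0$, so that $dH/d\eta\le\frac12w^2$ as you say. Set $B=\int_{\partial B_1}(\partial_\nu u)^2\,dS$ and $\kappa=\frac{N-2}{2}-\frac{N}{p+1}>0$. Combining Pohozaev with $K+\lambda M=D$ gives $\kappa D+\lambda M+\frac12B=0$. At the Dirichlet zero $R=e^T$ one has $|\Sph^{N-1}|H(T)/\eta(T)=B/(2|\lambda|)$, so this reads
\[
 \int_{B_1}u^2\,dx=|\Sph^{N-1}|\frac{H(T)}{\eta(T)}+\frac{\kappa}{|\lambda|}\int_{B_1}|u|^{p+1}\,dx .
\]
The correction is nonnegative. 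The bulk term does make $\lambda\int u^2$ more negative, but because $\lambda<0$ that \emph{increases} $M$. So the estimate $H\lesssim\eta$, which your two-regime argument does deliver, bounds only a quantity smaller than the mass.

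The same defect is visible in Fowler variables. One has $M=|\Sph^{N-1}|\int_{-\infty}^Tw^2e^{(N-2m)(t-T)}\,dt$, whereas $H(T)/\eta(T)=\int_{-\infty}^Tw^2e^{2(t-T)}\,dt-\frac{\beta}{\eta(T)}\int_{-\infty}^T\dot w^2\,dt$ with $\beta=N-2-2m$. Friction depletes $H$, not $M$. Your fallback is circular: eliminating $D$ through the Nehari identity gives $(1+\kappa)|\lambda|M=\frac12B+\kappa K$, and a uniform bound on $K/|\lambda|$ is no easier than the original question.

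The paper does not use a mass quotient for $p>p_c$. With $\alpha=m$, it proves three things:
\begin{itemize}
\item a core bound $s^\alpha(|z_A|+s|z_A'|)\le C_R$ on $(0,R]$, uniform in $A$, by Gronwall for $\mathcal H_A+K_R$, discarding $-\beta q^2$ and using that $e^{2t}$ is integrable at $-\infty$;
\item an outer decay $|z_A|+|z_A'|\le Cs^{-(N-1)/2}$ for $s\ge R_0$, via an energy for $s^{(N-1)/2}z_A$;
\item hence the envelope $|z_A|\le C\min\{A,s^{-\alpha}\}$, which integrates to $\int_0^Sz_A^2s^{N-1}\,ds\le CS^{N-2\alpha}$, i.e.\ $\mathcal M_j(A)\le C$.
\end{itemize}

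Your own idea can also be rescued. Run the two-regime argument on $\widetilde H(\eta):=\frac12\int_0^\eta w^2\,d\xi=H+\beta\int_{-\infty}^t\dot w^2\,d\tau\ge H$ instead of on $H$. Because $N-2m>2$, the Fowler formula gives $M\le|\Sph^{N-1}|\widetilde H(\eta(T))/\eta(T)$. Moreover $d\widetilde H/d\eta=\frac12w^2$ exactly. Both regime inequalities transfer since $H\le\widetilde H$: $w^2\le C(1+H)\le C(1+\widetilde H)$ for $\eta\le\eta_0$, and $w^2\le 2H/(\eta-c)\le 2\widetilde H/(\eta-c)$ for $\eta\ge\eta_0>c$, where $c=m(N-2-m)$.

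Finally, Step~3 needs two facts you leave open. First, $\lambda<0$ for every solution, which is immediate from $\kappa D+\lambda M+\frac12B=0$. Second, for every $\gamma>0$ the shooting solution has infinitely many simple zeros. The paper proves this with Pr\"ufer variables and the outer energy bound; Sturm comparison with $y''+\frac{N-1}{s}y'+\gamma y=0$ also works, since $\gamma+|V|^{p-1}\ge\gamma$. Then each branch is defined on all of $\gamma\in(0,\infty)$, which is the connectedness the intermediate value argument requires. The finitely-many-zeros scenario does not occur, and the Bessel limit is $\gamma=-\mu\to+\infty$, not $\mu\to+\infty$.
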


\begin{remark}\label{rem:open}
The critical mechanism is specific to the Sobolev exponent: the Fowler
change removes every power of $e^t$ from the nonlinearity, while the
frequency contribution becomes the monotone coefficient $\eta=\gamma e^{2t}$.
The identity \eqref{eq:Hdeta} then turns the desired mass quotient into an
ODE estimate.  It remains open whether the endpoint $\rho_{\rm rad}^*$ is
attained.  Theorems~\ref{thm:main} and \ref{thm:superradial} concern only
radial solutions and do not exclude nonradial solutions of arbitrarily large
mass.  Finally, for powers above $2^*-1$, the critical Fowler cancellation in the nonlinear term is
lost; the conclusion in Theorem~\ref{thm:superradial} is
instead obtained from the separate dynamical systems argument of
Section~\ref{sec:supercritical}.
\end{remark}
\begin{remark}
Theorem~\ref{thm:main} concerns all radial solutions: they may change sign,
have arbitrarily many nodal regions, and need not be variational critical
points selected by any particular minimax procedure.  It does \emph{not}
exclude nonradial solutions of arbitrarily large mass.  Theorem~\ref{thm:superradial}
settles the corresponding question for strictly supercritical powers only
within the radial class; the nonradial problem remains open.
\end{remark}

The paper is organized as follows.  Section~\ref{sec:critical} treats the
Sobolev critical exponent.  We first reduce the radial problem to a shooting
equation and determine the sign of the Lagrange multiplier.  We then introduce
the Fowler variables, derive the exact mass quotient at every Dirichlet zero,
and prove the uniform quotient bound which yields Theorem~\ref{thm:main}.
The same shooting parametrization and its large frequency Bessel limit then
give the interval characterization in Theorem~\ref{thm:spectrum}.
Section~\ref{sec:supercritical} establishes Theorem~\ref{thm:superradial}.
There we combine a uniform supercritical Fowler core estimate with an
outer region estimate and the small-amplitude Bessel limit, obtaining the
finite radial mass threshold and the corresponding interval alternative.

\section{The critical case}\label{sec:critical}

\subsection{Proof of Theorem~\ref{thm:main}}\label{sec:reduction}

We first record the sign forced by criticality.

\begin{lemma}\label{lem:lambda}
If $(u,\lambda)$ is a nontrivial solution of \eqref{eq:problem}, then
$\lambda<0$.
\end{lemma}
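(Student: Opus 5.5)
We prove $\lambda<0$ for nontrivial radial solutions by the Pohozaev identity on the ball, which is star-shaped with respect to the origin. Multiply the equation by $x\cdot\nabla u$ and by $u$ and integrate over $B_1$. The regularity $u\in C^2(B_1)\cap C^1(\overline{B_1})$ is enough to justify the computation. We can integrate over $B_r$ and let $r\to1$, or work directly with the radial profile $U$ and the identity obtained from $(r^{N}U'^2)'$-type manipulations. Either way we obtain, with $\nu$ the outward normal,
\[
\frac{N-2}{2}\int_{B_1}|\nabla u|^2\,dx+\frac{\lambda N}{2}\int_{B_1}u^2\,dx-\frac{N}{2^*}\int_{B_1}|u|^{2^*}\,dx=-\frac12\int_{\partial B_1}|\partial_\nu u|^2(x\cdot\nu)\,d\sigma .
\]
Testing the equation with $u$ gives $\int|\nabla u|^2+\lambda\int u^2=\int|u|^{2^*}$.

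Next we eliminate the critical term. Since $N/2^*=(N-2)/2$, substituting $\int|u|^{2^*}$ cancels the gradient terms exactly, leaving
\[
\lambda\int_{B_1}u^2\,dx=-\frac12\int_{\partial B_1}|\partial_\nu u|^2\,d\sigma .
\]
On the unit sphere $x\cdot\nu=1$, and radially $|\partial_\nu u|=|U'(1)|$. So this reads $\lambda\rho=-\tfrac12|\Sph^{N-1}|\,U'(1)^2\le 0$.

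It remains to show the inequality is strict, and this is the only delicate step. We argue $U'(1)\neq0$ by ODE uniqueness. The radial equation $U''+\frac{N-1}{r}U'=\lambda U-|U|^{2^*-2}U$ is regular at $r=1$, with a locally Lipschitz nonlinearity since $2^*-1>1$. If $U(1)=U'(1)=0$, uniqueness for the initial value problem at $r=1$ forces $U\equiv0$ on $(0,1]$, contradicting nontriviality. Hence $U'(1)^2>0$ and $\rho>0$, so $\lambda<0$. The main care is justifying the boundary term in Pohozaev with only $C^1(\overline{B_1})$ regularity. In the radial setting this is straightforward: multiply the ODE by $r^N U'$ and by $r^{N-1}U$ and integrate on $(0,1)$. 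The terms at $r=0$ vanish because $U'(0)=0$ and $U$ is bounded.
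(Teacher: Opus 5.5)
Your proof is correct and follows the paper's argument essentially step by step: you combine the Nehari identity with the Pohozaev identity on $B_1$, use $N/2^*=(N-2)/2$ to reduce to $\lambda\int_{B_1}u^2\,dx=-\frac12\int_{\partial B_1}(\partial_\nu u)^2\,dS$, and obtain strictness from uniqueness for the radial ODE with Cauchy data $U(1)=U'(1)=0$ at the regular point $r=1$. Your added justification of the boundary term through the radial profile (multiplying by $r^NU'$ and $r^{N-1}U$ on $(0,1)$) is a detail that the paper leaves implicit.
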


\begin{proof}
Set $A=\int_{B_1}|\nabla u|^2dx$, $M=\int_{B_1}u^2dx$, and
$D=\int_{B_1}|u|^{2^*}dx$.  Testing the equation against $u$ yields
\begin{equation}\label{eq:nehari}
 A+\lambda M=D.
\end{equation}
The Pohozaev identity on the ball gives
\begin{equation}\label{eq:poho-global}
 \frac{N-2}{2}A+\frac{N\lambda}{2}M-
 \frac{N}{2^*}D+\frac12\int_{\partial B_1}(\partial_\nu u)^2\,dS=0.
\end{equation}
Since $\frac{N}{2^*}=\frac{N-2}{2}$, inserting \eqref{eq:nehari} into
\eqref{eq:poho-global} gives
\[
 \lambda M=-\frac12\int_{\partial B_1}(\partial_\nu u)^2\,dS.
\]
Since $u$ is radial, vanishing of the boundary integral would imply
$u(1)=u_r(1)=0$.  Uniqueness for the radial ODE with Cauchy data prescribed
at $r=1$ would then give $u\equiv0$, contradicting nontriviality.  Hence the
boundary integral is strictly positive and $\lambda<0$.
\end{proof}

Write $\mu=-\lambda>0$.  A nontrivial radial solution has $u(0)\ne0$.
After changing sign if necessary, put $a=u(0)>0$ and define
\begin{equation}\label{eq:shootscale}
 v(r)=a^{-1}u\left(a^{-\frac2{N-2}}r\right),\qquad
 \gamma=\mu a^{-\frac4{N-2}}.
\end{equation}
Then $v$ solves
\begin{equation}\label{eq:shoot}
 v''+\frac{N-1}{r}v'+\gamma v+|v|^{\frac4{N-2}}v=0,
 \qquad v(0)=1,\quad v'(0)=0.
\end{equation}
The associated radial energy
\[
 \mathcal E_\gamma(r)=\frac12|v'(r)|^2+\frac\gamma2|v(r)|^2+
 \frac1{2^*}|v(r)|^{2^*}
\]
satisfies
\[
 \mathcal E_\gamma'(r)=-\frac{N-1}{r}|v'(r)|^2\le0.
\]
Hence the shooting solution is global on $[0,\infty)$ by the standard
ODE continuation criterion.
If $R$ is the zero corresponding to the boundary of $B_1$, then
\begin{equation}\label{eq:rescale-back}
 u(x)=R^{\frac{N-2}{2}}v(R|x|),\qquad |x|<1.
\end{equation}
Thus it is enough to obtain a bound independent of $\gamma>0$ and of every
zero $R$ of the shooting solution.

Set
\begin{equation}\label{eq:fowlerdef}
 \alpha=\frac{N-2}{2},\qquad t=\log r,\qquad
 w(t)=e^{\alpha t}v(e^t),\qquad \eta(t)=\gamma e^{2t}.
\end{equation}
Equation \eqref{eq:shoot} becomes
\begin{equation}\label{eq:fowler}
 w''-\alpha^2w+\eta w+|w|^{2^*-2}w=0,
 \qquad \eta'=2\eta.
\end{equation}
Introduce the non-autonomous energy
\begin{equation}\label{eq:energy}
 H(t)=\frac12w'(t)^2-\frac{\alpha^2}{2}w(t)^2+
 \frac{\eta(t)}2w(t)^2+\frac1{2^*}|w(t)|^{2^*}.
\end{equation}
Regularity at the origin gives $H(t)\to0$ as $t\to-\infty$.  Differentiating
\eqref{eq:energy} and using \eqref{eq:fowler} yields
\begin{equation}\label{eq:energyderivative}
 H'(t)=\eta(t)w(t)^2\geq0.
\end{equation}
In particular $H\geq0$.

\begin{lemma}\label{lem:masszero}
Let $R=e^T$ be a zero of $v$.  Then
\begin{equation}\label{eq:massq}
 \int_{B_1}|u|^2\,dx
 =|\Sph^{N-1}|\frac{H(T)}{\eta(T)}.
\end{equation}
\end{lemma}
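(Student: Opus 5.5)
The plan is to compute both sides of \eqref{eq:massq} directly and to match them through the energy identity \eqref{eq:energyderivative}. No separate Pohozaev computation is needed, because $H'=\eta w^2$ already encodes it.

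\textbf{Left-hand side.} Take $R=e^T$ to be the zero of $v$ that corresponds to $\partial B_1$, so that \eqref{eq:rescale-back} holds. Substituting $y=Rx$ gives
\[
 \int_{B_1}|u|^2\,dx=R^{N-2}\int_{B_1}v(R|x|)^2\,dx
 =R^{-2}\int_{B_R}v(|y|)^2\,dy
 =|\Sph^{N-1}|\,R^{-2}\int_0^R v(r)^2r^{N-1}\,dr .
\]
Since $\eta(T)=\gamma e^{2T}=\gamma R^2$, the claim \eqref{eq:massq} is equivalent to
\[
 H(T)=\gamma\int_0^R v(r)^2r^{N-1}\,dr .
\]

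\textbf{Right-hand side.} Integrate \eqref{eq:energyderivative} from $-\infty$ to $T$, using $H(t)\to0$ as $t\to-\infty$. This gives $H(T)=\int_{-\infty}^T\eta(t)w(t)^2\,dt$. Now return to the original variables via \eqref{eq:fowlerdef}. We have $\eta w^2=\gamma e^{2t}e^{(N-2)t}v(e^t)^2=\gamma r^{N}v(r)^2$ and $dt=dr/r$. Hence
\[
 H(T)=\gamma\int_0^R r^{N-1}v(r)^2\,dr,
\]
which is exactly the required identity. The integral converges at $r=0$ because $v$ is bounded, since $v(0)=1$ and $v$ is smooth near the origin.

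\textbf{Expected obstacles.} The only points needing care are, first, justifying $H(t)\to0$ as $t\to-\infty$, and second, the integrability of $\eta w^2$ near $-\infty$. For the limit of $H$: since $v,v'$ are bounded near $r=0$ with $v'(0)=0$, we get $w=O(e^{\alpha t})$ and $w'=\alpha w+e^{(\alpha+1)t}v'(e^t)=O(e^{\alpha t})$. Every term of \eqref{eq:energy} is therefore $O(e^{2\alpha t})\to0$, using $\alpha>0$ for $N\ge3$. The integrability of $\eta w^2=O(e^{(2+2\alpha)t})$ then follows from the same bounds. The fact that $v(R)=0$ is not actually used in the computation; it only matters that $R$ is the rescaling radius in \eqref{eq:rescale-back}. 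Consistently with the introduction, the formula can also be read as the radial Pohozaev identity at a Dirichlet zero.
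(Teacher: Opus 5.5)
Your proof is correct, but it takes a slightly different route from the paper.

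The paper uses the zero condition in an essential way. At $w(T)=0$ it reads off $H(T)=\frac12w'(T)^2=\frac12R^Nv'(R)^2$. It then invokes the radial Pohozaev identity \eqref{eq:radialpoho}, $\gamma\int_0^R v^2r^{N-1}\,dr=\frac12R^Nv'(R)^2$, which it states without derivation.

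You instead integrate the already-established identity \eqref{eq:energyderivative} from $-\infty$. This gives $H(T)=\gamma\int_0^R v^2r^{N-1}\,dr$ for every $T$, not only at zeros. In the original variables, $H=r^N\bigl(\frac12v'^2+\frac\gamma2v^2+\frac1{2^*}|v|^{2^*}\bigr)+\frac{N-2}{2}r^{N-1}vv'$ is exactly the Pohozaev function. So $H'=\eta w^2$ is the Pohozaev identity in differential form, and both arguments rest on the same identity.

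Your version gains in three ways:
\begin{itemize}
\item It is self-contained: it uses only what the paper actually derives.
\item You supply the justification of $H(t)\to0$ that the paper merely asserts.
\item It makes visible your correct observation that $v(R)=0$ is needed only to make $u$ a Dirichlet solution, not for the formula itself.
\end{itemize}
It is also the same integrated form $H(\eta)=\frac12\int_0^\eta w^2\,d\xi$ that the paper itself uses later in Proposition~\ref{prop:quotient}.

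The paper's version gains the explicit link between $H(T)$ and the boundary flux $\frac12R^Nv'(R)^2$. That link matches the identity $\lambda M=-\frac12\int_{\partial B_1}(\partial_\nu u)^2\,dS$ from Lemma~\ref{lem:lambda}. It is why the formula is described as the Pohozaev identity at a Dirichlet zero.
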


\begin{proof}
The radial Pohozaev identity for \eqref{eq:shoot}, integrated on $(0,R)$,
is
\begin{equation}\label{eq:radialpoho}
 \gamma\int_0^R v(r)^2r^{N-1}\,dr
 =\frac12R^Nv'(R)^2.
\end{equation}
At a zero $T$, that is $w(T)=0$, \eqref{eq:energy} gives
\[
 H(T)=\frac12w'(T)^2=\frac12R^Nv'(R)^2.
\]
On the other hand, \eqref{eq:rescale-back} gives
\[
 \int_{B_1}|u|^2\,dx
 =|\Sph^{N-1}|R^{-2}\int_0^R v(r)^2r^{N-1}\,dr.
\]
Combining these equalities with \eqref{eq:radialpoho} proves
\eqref{eq:massq}.
\end{proof}

The following estimate is the heart of the proof.

\begin{proposition}\label{prop:quotient}
There exists $C_N>0$ such that every solution of \eqref{eq:fowler} arising
from \eqref{eq:shoot} satisfies
\begin{equation}\label{eq:quotient}
 0\leq \frac{H(t)}{\eta(t)}\leq C_N\qquad\text{for all }t\in\R.
\end{equation}
\end{proposition}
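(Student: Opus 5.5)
The plan is to parametrize the trajectory by $\eta$ instead of $t$. Since $\eta'=2\eta$, \eqref{eq:energyderivative} becomes
\[
\frac{dH}{d\eta}=\frac12 w^2 ,\qquad H\to0\ \text{as}\ \eta\to0^+ .
\]
All constants must be independent of $\gamma$, and working in $\eta$ achieves this: \eqref{eq:fowler} involves $\gamma$ only through $\eta$. I split into a bounded regime $0<\eta\le\eta_0:=2\alpha^2$ and a large regime $\eta\ge\eta_0$. The lower bound $H\ge0$ is already given by \eqref{eq:energyderivative}.

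\emph{Bounded regime.} The Fowler potential $V(w)=-\frac{\alpha^2}{2}w^2+\frac1{2^*}|w|^{2^*}$ is coercive, so $V(w)\ge \frac1{2\cdot 2^*}|w|^{2^*}-c_N$. From \eqref{eq:energy} I get $|w|^{2^*}\le C(H+1)$, hence
\[
\frac{dH}{d\eta}\le C_N(1+H)^{2/2^*},
\]
and $2/2^*<1$. Integrating from $\eta=0$, where $H=0$, up to $\eta_0$ gives $H\le K_N$ on $\{\eta\le\eta_0\}$, uniformly in $\gamma$. For $\eta\le\eta_0$, however, $H/\eta\le K_N/\eta$ still blows up as $\eta\to0$. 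So in this range I instead use $dH/d\eta=w^2/2\le C(1+K_N)^{2/2^*}$, which gives $H\le C\eta$ directly.

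\emph{Large regime.} For $\eta\ge\eta_0$, \eqref{eq:energy} gives $H\ge\frac{\eta}{4}w^2$ and $H\ge\frac12 w'^2-\frac{\alpha^2}2w^2$. Hence
\[
w^2\le \frac{4H}{\eta},\qquad w'^2\le 2H+\alpha^2w^2\le CH .
\]
Using only the first bound, $dH/d\eta\le 2H/\eta$, which yields $H\lesssim\eta^2$ and is too weak. The main obstacle is to gain the averaging factor: for large $\eta$, $w$ oscillates, and the time average of $\eta w^2$ should be about $H$, not $2H$.

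To obtain this, I use the virial identity. By \eqref{eq:fowler} and \eqref{eq:energy},
\[
(ww')'=w'^2+\alpha^2w^2-\eta w^2-|w|^{2^*}=2H+2\alpha^2w^2-2\eta w^2-\Bigl(1+\tfrac{2}{2^*}\Bigr)|w|^{2^*}.
\]
Dropping the last term gives
\[
H'=\eta w^2\le H+\alpha^2w^2-\tfrac12(ww')' .
\]
With the integrating factor $e^{-t}$, this reads $(e^{-t}H)'\le e^{-t}\alpha^2w^2-\frac12e^{-t}(ww')'$. I integrate over $[t_0,t]$, where $\eta(t_0)=\eta_0$, and integrate the last term by parts, producing a boundary term $e^{-t}ww'$ and an integral of $e^{-t}ww'$. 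The bounds above give
\[
|ww'|\le C\,H\,\eta^{-1/2},\qquad \alpha^2w^2\le 4\alpha^2H\eta^{-1}.
\]
Setting $F=e^{-t}H$, I then obtain
\[
F(t)\le F(t_0)+C\,F(t)\eta(t)^{-1/2}+C\int_{t_0}^t F(s)\bigl(\eta(s)^{-1/2}+\eta(s)^{-1}\bigr)\,ds .
\]
Choosing $\eta_0$ large enough (depending only on $N$, and still covered by the bounded-regime step) absorbs the boundary term. Since $\int_{t_0}^\infty\eta^{-1/2}\,dt=\int_{\eta_0}^\infty\frac{d\eta}{2\eta^{3/2}}<\infty$ is independent of $\gamma$, Gronwall's inequality gives $F\le C F(t_0)$.

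Finally, $F(t_0)=e^{-t_0}H(t_0)\le K_N e^{-t_0}$, and $e^{-t}\le e^{-t_0}$ for $t\ge t_0$. Therefore
\[
H(t)\le C K_N e^{t-t_0}=CK_N\sqrt{\eta/\eta_0}\le C_N\eta ,
\]
using $\eta\ge\eta_0$. Combined with the bounded regime, this proves \eqref{eq:quotient}. The delicate step is the virial/integration-by-parts gain; everything else is coercivity bookkeeping.
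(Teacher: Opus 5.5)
Your proof is correct, but in the large-$\eta$ regime it takes a genuinely different and more elaborate route than the paper. Your bounded regime matches the paper's in substance: both use uniform coercivity of $-\frac{\alpha^2}{2}s^2+\frac1{2^*}|s|^{2^*}$ and integrate $dH/d\eta=\frac12w^2$ from $\eta=0$. The paper uses $w^2\le C_N(1+H)$ with a linear Gronwall, you use the sublinear power $(1+H)^{2/2^*}$, and this is an inessential difference.

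For $\eta\ge\eta_0$, your claim that the direct comparison only gives $H\lesssim\eta^2$ is an artifact of replacing $\frac{\eta-\alpha^2}{2}$ by $\frac\eta4$. The paper keeps the exact coefficient. Then $H\ge\frac{\eta-\alpha^2}{2}w^2$ gives $\frac{dH}{d\eta}\le\frac{H}{\eta-\alpha^2}$, i.e. $\frac{d}{d\eta}\bigl(\frac{H}{\eta-\alpha^2}\bigr)\le0$. Hence $H(\eta)\le H(\eta_0)\frac{\eta-\alpha^2}{\eta_0-\alpha^2}$, which is exactly the linear bound required, with no virial identity. The paper takes logarithms and so first proves $H>0$ by unique continuation; the integrating-factor form shows even that is unnecessary.

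Your virial argument is nonetheless sound. It combines the identity for $(ww')'$, the integrating factor $e^{-t}\propto\eta^{-1/2}$, integration by parts, and Gronwall with the $\gamma$-independent weight $\int_{t_0}^\infty\eta^{-1/2}\,dt=\eta_0^{-1/2}$. The boundary term at $t_0$, which you omitted from your displayed inequality, is bounded by $CF(t_0)\eta_0^{-1/2}$ and is harmless. The constant in $|ww'|\le CH\eta^{-1/2}$ does not grow with $\eta_0$ (one can take $C=4$), so the absorption is legitimate.

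What your route buys is a strictly stronger conclusion: $H\le C_N\sqrt{\eta}$ for $\eta\ge\eta_0$, i.e. $H/\eta\le C_N\eta^{-1/2}$ uniformly in $\gamma$. This is the WKB-sharp rate. Via Lemma~\ref{lem:masszero}, it shows that masses coming from Dirichlet zeros with large $\eta(T)=\gamma R^2$ are uniformly small, which the paper's borderline comparison does not give. For the proposition as stated, however, the paper's two-line comparison suffices.
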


\begin{proof}
Since $\eta(t)=\gamma e^{2t}$ is a $C^1$ diffeomorphism from $\R$ onto $(0,\infty)$, we may regard both $H$ and $w$ as continuous functions of $\eta$.  From \eqref{eq:energyderivative} and \eqref{eq:fowlerdef},
\begin{equation}\label{eq:Hdeta}
 \frac{dH}{d\eta}=\frac12w^2.
\end{equation}
Fix $\eta_0=1+2\alpha^2$.  For $0\leq\eta\leq\eta_0$,
\[
 F_\eta(s)=-\frac{\alpha^2-\eta}{2}s^2+\frac1{2^*}|s|^{2^*}
\]
obey a coercive lower bound which is uniform in $\eta$.  Indeed,
$|\alpha^2-\eta|\leq C_N$ on this interval, and, since $2^*>2$, Young's
inequality gives, for every $\varepsilon>0$,
\[
 s^2\leq \varepsilon |s|^{2^*}+C_{N,\varepsilon}.
\]
Choosing $\varepsilon$ sufficiently small yields constants $c_N,C_N>0$
such that
\begin{equation}\label{eq:coercive-compact}
 F_\eta(s)\geq c_N|s|^{2^*}-C_N,
 \qquad s\in\R,\quad 0\leq\eta\leq\eta_0.
\end{equation}
Consequently,
\[
 c_N|w|^{2^*}\leq H+C_N,
\]
because $H=\frac12w'^2+F_\eta(w)$.  A second application of
$|w|^2\leq 1+|w|^{2^*}$ therefore gives
\begin{equation}\label{eq:wH}
 w^2\leq C_N(1+H),\qquad 0\leq\eta\leq\eta_0.
\end{equation}
Notice that this estimate does not require any upper bound for $w'$; the
nonnegative kinetic term is simply discarded.
Since $\eta(t)\rightarrow0$ and $H(t)\to0$ as $t\to-\infty$, we can extend
$H$ continuously to $\eta=0$ by setting $H(0)=0$.  Integrating
\eqref{eq:Hdeta} and using \eqref{eq:wH}, for $0\leq\eta\leq\eta_0$ we obtain
\[
 H(\eta)=\frac12\int_0^\eta w(\xi)^2\,d\xi
 \leq C_N\eta+C_N\int_0^\eta H(\xi)\,d\xi,
\]
where, with a harmless abuse of notation, $w(\xi)$ denotes $w$ written as
a function of the strictly increasing variable $\eta$. Therefore, by using the Gronwall's inequality, we obtain
\[
 H(\eta)\leq C_N\eta e^{C_N\eta}
 \leq C_N'\eta,\ 0\leq\eta\leq\eta_0 .
\]
Hence $\frac{H}{\eta}$ is bounded in this range.

For $\eta\geq\eta_0$, $F_\eta(s)$ is positive and satisfies
\begin{equation}\label{eq:coercive-large}
 F_\eta(s)\geq\frac{\eta-\alpha^2}{2}s^2.
\end{equation}
Consequently, by \eqref{eq:Hdeta},
\[
 \frac{dH}{d\eta}\leq\frac{H}{\eta-\alpha^2}.
\]
For $\eta\geq\eta_0$, we therefore have
\[
 \frac{d}{d\eta}\log H(\eta)
 \leq \frac{1}{\eta-\alpha^2}
\]
whenever $H(\eta)>0$.  In fact, by \eqref{eq:Hdeta} and $H(0)=0$,
\[
 H(\eta)=\frac12\int_0^\eta (w(\xi))^2\,d\xi.
\]
Hence, if $H(\eta_*)=0$ for some $\eta_*>0$, continuity and nonnegativity of $(w(\xi))^2$ imply $w(\xi)=0$ for every $0<\xi<\eta_*$.  Since $\eta\mapsto t=\frac12\log(\frac{\eta}{\gamma})$ is one to one, $w$ vanishes on a nonempty open $t$ interval.  At any point of that interval one has $w=w'=0$; uniqueness for the smooth Fowler ODE then gives $w\equiv0$, and therefore $v\equiv0$, contradicting $v(0)=1$.  Thus $H(\eta)>0$ for every
$\eta>0$, and integration from $\eta_0$ to $\eta$ gives
\[
 H(\eta)\leq H(\eta_0)
 \frac{\eta-\alpha^2}{\eta_0-\alpha^2}.
\]
Using the preceding estimate $H(\eta_0)\leq C_N\eta_0$, and recalling
that $\eta_0=1+2\alpha^2$ depends only on $N$, we obtain
\[
 H(\eta)\leq C_N\frac{\eta_0}{\eta_0-\alpha^2}
 (\eta-\alpha^2)\leq C_N'\eta,
 \qquad \eta\geq\eta_0.
\]
Dividing by $\eta$ proves \eqref{eq:quotient}.
\end{proof}

\begin{proof}[Proof of Theorem~\ref{thm:main}]
For every radial solution, the reduction in Section~\ref{sec:reduction}
produces a shooting pair $(v,\gamma)$ and a finite zero $R=e^T$.
Lemma~\ref{lem:masszero} and Proposition~\ref{prop:quotient} imply
\[
 \int_{B_1}|u|^2\,dx
 \leq |\Sph^{N-1}|C_N.
\]
Thus Theorem~\ref{thm:main} holds with
$\rho_N=|\Sph^{N-1}|C_N$.
\end{proof}

\subsection{Proof of Theorem~\ref{thm:spectrum}}\label{sec:spectrum}

\begin{lemma}\label{lem:criticaloscillation}
For every $\gamma>0$, the shooting solution $v_\gamma$ of
\eqref{eq:shoot} has infinitely many positive, simple zeros.
\end{lemma}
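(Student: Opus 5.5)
Simplicity of zeros follows from uniqueness: if $v_\gamma(r_0)=v_\gamma'(r_0)=0$ at some $r_0>0$, then $v_\gamma\equiv0$ by uniqueness for the regular ODE \eqref{eq:shoot} on $(0,\infty)$. This contradicts $v(0)=1$. So the task is to show that $v_\gamma$ has infinitely many zeros.

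I will argue by Sturm comparison with the linear Bessel-type equation. Suppose, for contradiction, that $v_\gamma$ has only finitely many zeros. Then $v_\gamma$ has a fixed sign on some half-line $(r_1,\infty)$; say $v_\gamma>0$ there. Write \eqref{eq:shoot} as
\[
(r^{N-1}v')'+q(r)\,r^{N-1}v=0,\qquad q(r)=\gamma+|v(r)|^{\frac4{N-2}}\geq\gamma.
\]
Compare with $z(r)=r^{-\frac{N-2}{2}}J_{\frac{N-2}{2}}(\sqrt\gamma\,r)$. This function solves $(r^{N-1}z')'+\gamma r^{N-1}z=0$ and has zeros tending to infinity, with consecutive zeros at distance approaching $\pi/\sqrt\gamma$. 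Take two consecutive zeros $z_1<z_2$ of $z$ with $z_1>r_1$. The standard Picone/Sturm identity
\[
\bigl[r^{N-1}(z'v-zv')\bigr]_{z_1}^{z_2}=\int_{z_1}^{z_2}(q-\gamma)r^{N-1}zv\,dr
\]
has a left side of the wrong sign when $v>0$ on $[z_1,z_2]$. This forces a zero of $v$ in $(z_1,z_2)$, which is a contradiction. The key sign check is that if $z>0$ on $(z_1,z_2)$, then $z'(z_1)>0$ and $z'(z_2)<0$. The left side then equals $r^{N-1}z'v$ evaluated between the endpoints, which is at most $0$. The right side is at least $0$, and strictly positive unless $q\equiv\gamma$, so equality would force $v\equiv0$ there.

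An alternative, cleaner route uses Fowler variables. The function $w$ solves $w''+(\eta(t)-\alpha^2+|w|^{2^*-2})w=0$, and the coefficient exceeds $1$ once $\eta\ge\eta_0$. Sturm comparison with $\sin(t-t_0)$ then gives a zero in every $t$-interval of length $\pi$ beyond that point. I would present this version, since it needs only the trivial comparison $w''+Q w=0$ with $Q\geq1$.

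The only mildly delicate step is justifying the comparison when $v$ could be tangent to zero, and this is excluded by the simplicity argument above. The proof therefore has no real obstacle.
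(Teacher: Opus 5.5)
Your proof is correct, but it takes a different route from the paper's.

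\textbf{The paper's route.} It passes to $\psi=r^{(N-1)/2}v_\gamma$. It then uses the energy $E$ and Gronwall to show that $\psi,\psi'$ stay bounded on $[R_0,\infty)$. Finally, the Pr\"ufer phase equation gives $\theta(r)=\sqrt\gamma\,r+O_\gamma(1)$. The bound on $\psi$ is used only to control the nonlinear term in $\theta'$ from above.

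\textbf{Your route.} You use only the sign of the focusing nonlinearity. The factor $|v|^{4/(N-2)}$, or $|w|^{2^*-2}$ in Fowler variables, is simply a nonnegative addition to the linear coefficient. So a Sturm--Picone comparison does the job, with no a priori bound on the solution. You can compare with $r^{-(N-2)/2}J_{(N-2)/2}(\sqrt\gamma\, r)$, or more simply with $\sin(t-t_0)$ once $\eta-\alpha^2+|w|^{2^*-2}\ge 1$, i.e.\ for $\eta\ge 1+\alpha^2$. Either comparison forces a zero on every comparison interval, hence infinitely many zeros.

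Your Picone identity and sign check are right. In fact the right-hand side is strictly positive, since $q-\gamma=|v|^{4/(N-2)}>0$ wherever $v>0$. Also, $\eta$ is strictly increasing, so the Fowler comparison even gives a zero in each open interval of length $\pi$.

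\textbf{What each approach buys.} Your argument is shorter and needs only a one-sided estimate. That estimate is already visible in the paper's phase equation: $\theta'\ge\sqrt\gamma-k/(\sqrt\gamma\, r^2)$ holds because $k\ge0$ and the nonlinear term is nonnegative, so the bound on $\psi$ is not needed for this lemma. The paper's route gives more: two-sided asymptotics $\theta(r)=\sqrt\gamma\, r+O(1)$, i.e.\ zero spacing tending to $\pi/\sqrt\gamma$. It also provides a template reused in Lemma~\ref{lem:superoscillation}. Neither refinement is needed for the statement at hand.

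One small point: the tangency caveat in your last paragraph plays no role in the comparison. Once only finitely many zeros are assumed, a fixed sign on a half-line is automatic. Simplicity is a separate part of the statement, and you settle it correctly by ODE uniqueness.
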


\begin{proof}
Put $m=\frac{N-1}{2}$, $k=\frac{(N-1)(N-3)}{4}$, and
$\psi(r)=r^mv_\gamma(r)$.  Since $\gamma>0$ is fixed, choose
$R_0=R_0(\gamma)$ so large that
\[
 \gamma-\frac{k}{r^2}\ge\frac\gamma2,\ r\ge R_0 .
\]
On $[R_0,\infty)$ one has
\[
 \psi''+\left(\gamma-\frac{k}{r^2}\right)\psi+
 r^{-a}|\psi|^{\frac4{N-2}}\psi=0,
 \qquad a=\frac{2(N-1)}{N-2}>2.
\]
Define
\[
 E(r)=\frac12\psi'^2+\frac12\left(\gamma-\frac{k}{r^2}\right)\psi^2
 +\frac1{2^*}r^{-a}|\psi|^{2^*}.
\]
More precisely,
\[
 E(r)\ge \frac12\psi'(r)^2+\frac{\gamma}{4}\psi(r)^2
 \ge c_\gamma\bigl(\psi'(r)^2+\psi(r)^2\bigr),
 \qquad c_\gamma=\min\left\{\frac12,\frac{\gamma}{4}\right\}.
\]
Moreover,
\[
 E'(r)=\frac{k}{r^3}\psi^2-
 \frac{a}{2^*}r^{-a-1}|\psi|^{2^*}\le C_\gamma r^{-3}E(r).
\]
Gronwall's inequality therefore yields
\[
 E(r)\le E(R_0)\exp\left(C_\gamma\int_{R_0}^r \tau^{-3}\,d\tau\right)
 \le C_\gamma E(R_0),\qquad r\ge R_0.
\]
Thus $E$ is bounded above, and hence so are $\psi$ and $\psi'$.  We now
spell out the oscillation argument.  Put $\omega=\sqrt\gamma$ and introduce
Pr$\mathrm{\ddot{u}}$fer variables by
\[
 \psi=\varrho\sin\theta,\qquad
 \psi'=\omega\varrho\cos\theta,
 \qquad \varrho>0.
\]
They are well defined on $[R_0,\infty)$ after a continuous choice of
$\theta$, since $(\psi,\psi')$ cannot vanish simultaneously.  Writing the
ODE as
\[
 \psi''+\bigl(\omega^2+Q(r)\bigr)\psi=0,\qquad
 Q(r)=-\frac{k}{r^2}+r^{-a}|\psi(r)|^{\frac4{N-2}},
\]
and differentiating the two Pr$\mathrm{\ddot{u}}$fer identities gives the exact phase equation
\[
 \theta'(r)
 =\omega+\frac{Q(r)}{\omega}\sin^2\theta(r)
 =\sqrt\gamma-\frac{k}{\sqrt\gamma\,r^2}\sin^2\theta(r)
 +\frac{r^{-a}}{\sqrt\gamma}|\psi(r)|^{\frac4{N-2}}
   \sin^2\theta(r).
\]
Since $\psi$ is bounded on $[R_0,\infty)$, with a bound depending only on
the fixed parameter $\gamma$, we obtain
\[
 |\theta'(r)-\sqrt\gamma|
 \le C_\gamma(r^{-2}+r^{-a}).
\]
Here $a=\frac{2(N-1)}{N-2}>2$, hence the right-hand side is integrable on
$(R_0,\infty)$.  Consequently
$\theta(r)=\sqrt\gamma r+O_\gamma(1)$ and tends to $+\infty$.  Therefore
$\psi$, and hence $v_\gamma$, has infinitely many zeros.  A zero cannot
satisfy simultaneously $v_\gamma=v_\gamma'=0$, by uniqueness for the
regular radial ODE; all zeros are simple.
\end{proof}

For every $\gamma>0$, let $R_j(\gamma)$ denote the $j$th positive zero of
the solution $v_\gamma$ of \eqref{eq:shoot}; the zeros are simple.  Define
\begin{equation}\label{eq:massbranches}
 M_j(\gamma)=|\Sph^{N-1}|R_j(\gamma)^{N-2}
 \int_0^1v_\gamma(R_j(\gamma)r)^2r^{N-1}\,dr.
\end{equation}
Thus $M_j(\gamma)$ is the mass of the radial Dirichlet solution obtained by
using the $j$th zero.  Standard continuous dependence for the regular radial ODE gives continuous
dependence of $v_\gamma$ and $v_\gamma'$ on $\gamma$ on compact radial
intervals.  Since every zero is simple, the implicit function theorem applied
to $v_\gamma(R)=0$ shows locally that $R_j(\gamma)$ depends continuously
(in fact $C^1$) on $\gamma$.  Formula \eqref{eq:massbranches} then implies
that $M_j$ is continuous on $(0,\infty)$.

\begin{lemma}\label{lem:largegamma}
For every fixed $j\geq1$,
\begin{equation}\label{eq:largegamma}
 M_j(\gamma)\longrightarrow0\qquad\text{as }\gamma\to+\infty.
\end{equation}
\end{lemma}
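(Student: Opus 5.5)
Since $M_j(\gamma)=|\Sph^{N-1}|R_j(\gamma)^{-2}\int_0^{R_j(\gamma)}v_\gamma(r)^2r^{N-1}\,dr$, the statement follows if, for large $\gamma$, $R_j(\gamma)$ is of order $\gamma^{-1/2}$ and $v_\gamma$ stays uniformly bounded on $[0,R_j(\gamma)]$. The plan is to rescale so that the linear term has unit frequency, in which case the nonlinearity becomes a vanishing perturbation. Put $s=\sqrt\gamma\,r$ and $z_\gamma(s)=v_\gamma(s/\sqrt\gamma)$. Then
\[
 z''+\frac{N-1}{s}z'+z+\gamma^{-1}|z|^{\frac4{N-2}}z=0,\qquad z(0)=1,\quad z'(0)=0.
\]
The energy $\mathcal E_\gamma$ is nonincreasing, so $|v_\gamma|\le C$ for all $\gamma\ge1$ with $C$ independent of $\gamma$; in fact $\frac\gamma2v^2\le\frac\gamma2+\frac1{2^*}$ gives $|v_\gamma|\le 2$ for $\gamma\geq1$. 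Hence the perturbation $\gamma^{-1}|z|^{4/(N-2)}z$ is $O(\gamma^{-1})$ uniformly in $s$. By continuous dependence for the regular radial ODE, written as a Volterra integral equation that handles the singular point $s=0$, we get $z_\gamma\to Z$ in $C^1([0,S])$ for every $S$. Here $Z(s)=\Gamma(N/2)(s/2)^{1-N/2}J_{N/2-1}(s)$ is the normalized Bessel solution of $Z''+\frac{N-1}{s}Z'+Z=0$, $Z(0)=1$.

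Next we pass zeros to the limit. Let $j_1<j_2<\dots$ be the positive zeros of $Z$; they are simple. Fix $S>j_{j}+1$ with $Z(S)\ne0$. By the $C^1$ convergence and simplicity, for large $\gamma$ the function $z_\gamma$ has exactly one zero near each $j_i$ with $j_i<S$, and no other zeros in $(0,S]$. The main obstacle is to exclude spurious zeros, in particular zeros that enter near $s=0$ or near points where $Z\ne0$. This is settled because $|z_\gamma|$ is bounded below uniformly on compact sets avoiding the $j_i$, and near $s=0$ we have $z_\gamma\to1$. Therefore the $j$th zero of $z_\gamma$ is $\sqrt\gamma\,R_j(\gamma)\to j_j$, that is, $R_j(\gamma)=(j_j+o(1))\gamma^{-1/2}$.

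Finally we compute
\[
 M_j(\gamma)=|\Sph^{N-1}|R_j^{-2}\gamma^{-N/2}\int_0^{\sqrt\gamma R_j}z_\gamma(s)^2s^{N-1}\,ds
 =|\Sph^{N-1}|\gamma^{1-\frac N2}\frac{1}{(\sqrt\gamma R_j)^2}\int_0^{\sqrt\gamma R_j}z_\gamma^2s^{N-1}\,ds.
\]
The last factor converges to $j_j^{-2}\int_0^{j_j}Z^2s^{N-1}\,ds<\infty$. Since $N\ge3$, the prefactor $\gamma^{1-N/2}\to0$, and hence $M_j(\gamma)\to0$. Equivalently, in the original variables the scaled solution satisfies $u(0)=R_j^{(N-2)/2}\to0$, so this is a small-amplitude Dirichlet eigenfunction limit.
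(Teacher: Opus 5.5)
Your proof is correct and follows the paper's argument essentially step for step: the same rescaling $z_\gamma(s)=v_\gamma(s/\sqrt\gamma)$, the uniform energy bound, the $C^1_{\rm loc}$ convergence to the regular Bessel solution, tracking the zeros through the simple Bessel zeros, and the factor $\gamma^{-(N-2)/2}$ in the mass. The only minor difference is that you obtain the $C^1$ convergence from a continuous-dependence/Gronwall estimate on the Volterra formulation, which also gives an $O(\gamma^{-1})$ rate, whereas the paper uses Arzel\`a--Ascoli together with uniqueness of the limit.
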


\begin{proof}
Put $z_\gamma(s)=v_\gamma(\frac{s}{\sqrt\gamma})$.  It satisfies
\begin{equation}\label{eq:besselperturbation}
 z_\gamma''+\frac{N-1}{s}z_\gamma'+z_\gamma+
 \frac1\gamma|z_\gamma|^{\frac4{N-2}}z_\gamma=0,
 \qquad z_\gamma(0)=1,\quad z_\gamma'(0)=0.
\end{equation}
For $\gamma\geq1$, its radial energy
\[
 E_\gamma(s)=\frac12 (z_\gamma'(s))^2+\frac12(z_\gamma(s))^2
 +\frac{N-2}{2N\gamma}|z_\gamma(s)|^{2^*}
\]
satisfies
\[
 E_\gamma'(s)=-\frac{N-1}{s}(z_\gamma'(s))^2\leq0,
 \qquad
 E_\gamma(0)=\frac12+\frac{N-2}{2N\gamma}\leq C_N.
\]
Consequently
\begin{equation}\label{eq:bessel-uniform}
 \sup_{\gamma\geq1}\sup_{s\geq0}
 \bigl(|z_\gamma(s)|+|z_\gamma'(s)|\bigr)\leq C_N.
\end{equation}
To handle the regular singular point $s=0$ explicitly, rewrite
\eqref{eq:besselperturbation} in divergence form:
\[
 (s^{N-1}z_\gamma')'
 =-s^{N-1}\left(z_\gamma+
 \frac1\gamma|z_\gamma|^{\frac4{N-2}}z_\gamma\right).
\]
Thus, for $s>0$,
\begin{equation}\label{eq:bessel-derivative-integral}
 z_\gamma'(s)=-s^{1-N}\int_0^s\tau^{N-1}
 \left(z_\gamma(\tau)+\frac1\gamma
 |z_\gamma(\tau)|^{\frac4{N-2}}z_\gamma(\tau)\right)d\tau.
\end{equation}
In particular, \eqref{eq:bessel-uniform} implies $|z_\gamma'(s)|\leq C_Ns$
near the origin, uniformly for $\gamma\geq1$.  Integrating once more gives
\[
 z_\gamma(s)=1-\int_0^s\sigma^{1-N}\int_0^\sigma\tau^{N-1}
 \left(z_\gamma(\tau)+\frac1\gamma
 |z_\gamma(\tau)|^{\frac4{N-2}}z_\gamma(\tau)\right)d\tau\,d\sigma.
\]
Hence, on every fixed interval $[0,L]$, the family $\{z_\gamma\}_{\gamma\geq1}$
is uniformly bounded and equicontinuous.  Arzel\`a--Ascoli gives, along any sequence
$\gamma_n\to\infty$, a subsequence converging uniformly on $[0,L]$ to a
function $z$.  Passing to the limit in \eqref{eq:bessel-derivative-integral}
then gives uniform convergence of the derivatives as well.  Therefore
$z_{\gamma_n}\to z$ in $C^1([0,L])$, and the limit solves
\[
 z''+\frac{N-1}{s}z'+z=0,\qquad z(0)=1,\quad z'(0)=0.
\]
The regular solution of this Bessel equation is unique; denote it by $J$.
Since every sequence has the same limit, we have
\[
 z_\gamma\longrightarrow J\qquad\text{in }C^1_{\rm loc}([0,\infty))
 \quad(\gamma\to\infty).
\]
If $\zeta_j$ is the $j$th positive zero of $J$, then $\zeta_j$ is simple and
\begin{equation}\label{eq:zeroscaling}
 \sqrt\gamma R_j(\gamma)\longrightarrow\zeta_j.
\end{equation}
Indeed, fix $j$ and choose pairwise disjoint intervals
$I_k=(\zeta_k-\delta_k,\zeta_k+\delta_k)$, $1\le k\le j$, so small that
$|J'(s)|\ge c_k>0$ on $I_k$.  The $C^1$ convergence implies that, for all
sufficiently large $\gamma$, $|z_\gamma'(s)|\ge c_k/2$ on $I_k$ and
$z_\gamma$ has opposite signs at the two endpoints of $I_k$.  Hence
$z_\gamma$ has exactly one zero in each $I_k$.  On the compact part of
$[0,\zeta_j+\delta_j]$ left after removing these intervals, $J$ is bounded
away from zero; uniform convergence therefore excludes any additional zeros
there.  Thus the zero in $I_j$ is precisely the $j$th positive zero of
$z_\gamma$, proving \eqref{eq:zeroscaling}.

Writing $s_j(\gamma)=\sqrt\gamma R_j(\gamma)$ and using
\eqref{eq:massbranches},
\[
 \begin{aligned}
 M_j(\gamma)
 &=|\Sph^{N-1}|\,\gamma^{-\frac{N-2}{2}}
 s_j(\gamma)^{N-2}
 \int_0^1 z_\gamma(s_j(\gamma)r)^2r^{N-1}\,dr\\
 &=C_{N,j}\gamma^{-\frac{N-2}{2}}
 +o\!\left(\gamma^{-\frac{N-2}{2}}\right),
 \end{aligned}
\]
where
\[
 C_{N,j}=|\Sph^{N-1}|\,\zeta_j^{N-2}
 \int_0^1J(\zeta_jr)^2r^{N-1}\,dr>0.
\]
This proves \eqref{eq:largegamma}.
\end{proof}

\begin{proof}[Proof of Theorem~\ref{thm:spectrum}]
Every radial solution is represented by a pair $(j,\gamma)$ as in
Section~\ref{sec:reduction}; hence
\begin{equation}\label{eq:radialrange}
 \mathcal R_{\rm rad}=\bigcup_{j\geq1}M_j((0,\infty)).
\end{equation}
Set
\begin{equation}\label{eq:rhostar}
 \rho_{\rm rad}^*=\sup_{j\geq1,\ \gamma>0}M_j(\gamma).
\end{equation}
It is positive because every nontrivial shooting solution produces a positive
mass, and it is finite by Theorem~\ref{thm:main}.  Let
$0<\rho<\rho_{\rm rad}^*$.  By \eqref{eq:rhostar}, there are $j$ and
$\gamma_0>0$ such that $M_j(\gamma_0)>\rho$.  Lemma~\ref{lem:largegamma}
provides $\gamma_1>\gamma_0$ with $M_j(\gamma_1)<\rho$.  Continuity of
$M_j$ gives a $\gamma\in(\gamma_0,\gamma_1)$ satisfying
$M_j(\gamma)=\rho$.  This proves
$(0,\rho_{\rm rad}^*)\subset\mathcal R_{\rm rad}$.  The reverse inclusion
in $(0,\rho_{\rm rad}^*]$ follows from the definition of the supremum.
The two alternatives in \eqref{eq:spectrum} are precisely the two possible
endpoint behaviors.
\end{proof}

\section{Supercritical case}\label{sec:supercritical}

This section proves the strictly Sobolev supercritical statement.  It isolates
the new ODE mechanism, which differs essentially from
the critical mass quotient argument.
Let
\begin{equation}\label{eq:superproblem}
 -\Delta u+\lambda u=|u|^{p-1}u\quad\hbox{in }B_1,
 \qquad u|_{\partial B_1}=0,
 \qquad p>\frac{N+2}{N-2}.
\end{equation}
Every nontrivial solution has $\lambda<0$.  Indeed, set
$K=\int_{B_1}|\nabla u|^2dx$, $M=\int_{B_1}u^2dx$ and
$D=\int_{B_1}|u|^{p+1}dx$.  Testing \eqref{eq:superproblem} against $u$ and
using the Pohozaev identity give
\[
 K+\lambda M=D,
 \qquad
 \frac{N-2}{2}K+\frac{N\lambda}{2}M-\frac{N}{p+1}D
 +\frac12\int_{\partial B_1}(\partial_\nu u)^2\,dS=0.
\]
Hence
\[
 \left(\frac{N-2}{2}-\frac{N}{p+1}\right)K+
 \left(\frac N2-\frac{N}{p+1}\right)\lambda M+
 \frac12\int_{\partial B_1}(\partial_\nu u)^2\,dS=0.
\]
Since $p>\frac{N+2}{N-2}$, the first two displayed coefficients are positive,
thus $\lambda<0$.  Put $\lambda=-\mu$, set
\[
 \alpha=\frac2{p-1},\qquad 0<\alpha<\frac{N-2}{2},
\]
and use the amplitude and frequency scalings described below.  All radial
solutions are represented by the regular nonlinear Helmholtz equation
\begin{equation}\label{eq:helmholtzA}
 z_A''+\frac{N-1}{s}z_A'+z_A+|z_A|^{p-1}z_A=0,
 \qquad z_A(0)=A,\quad z_A'(0)=0,\qquad A>0.
\end{equation}
The radial energy
\[
 \mathcal E_A(s)=\frac12|z_A'(s)|^2+\frac12|z_A(s)|^2+
 \frac1{p+1}|z_A(s)|^{p+1}
\]
satisfies
\[
 \mathcal E_A'(s)=-\frac{N-1}{s}|z_A'(s)|^2\le0.
\]
Hence the regular shooting solution $z_A$ exists globally on $[0,\infty)$ by the ODE continuation criterion.
If $S_j(A)$ is its $j$th positive zero, the corresponding normalized mass is
\begin{equation}\label{eq:supermass}
 \mathcal M_j(A)=|\Sph^{N-1}|S_j(A)^{2\alpha-N}
 \int_0^{S_j(A)}z_A(s)^2s^{N-1}\,ds.
\end{equation}
More precisely, if $S>0$ is any zero of $z_A$, then
\begin{equation}\label{eq:super-rescale-back}
 \lambda=-S^2,\qquad u(x)=S^\alpha z_A(S|x|),\qquad |x|<1,
\end{equation}
solves \eqref{eq:superproblem}.  Conversely, let $u$ be any nontrivial
radial solution of \eqref{eq:superproblem}.  As shown above, $\lambda<0$;
write $\lambda=-\mu$ and set $S=\sqrt\mu$.  Define
\[
 z(s)=S^{-\alpha}u(\frac{s}{S}),\qquad 0\leq s\leq S.
\]
A direct substitution, using $\alpha p=\alpha+2$, gives
\[
 z''+\frac{N-1}{s}z'+z+|z|^{p-1}z=0,
 \qquad z'(0)=0.
\]
Moreover $u(0)\neq0$: otherwise radial regularity gives $u'(0)=0$, and
uniqueness for the regular radial initial value problem would imply
$u\equiv0$.  After replacing $u$ by $-u$ if necessary, we may assume
$u(0)>0$ and put
\[
 A=z(0)=S^{-\alpha}u(0)>0.
\]
Uniqueness of the regular radial initial value problem yields $z=z_A$.
Finally $u(1)=0$ gives $z_A(S)=0$, so $S=S_j(A)$ for a unique $j\geq1$.
Thus every nontrivial radial solution is represented by \eqref{eq:super-rescale-back}.
More precisely, after fixing the sign normalization $u(0)>0$, the quantities
$S=\sqrt{-\lambda}$ and $A=S^{-\alpha}u(0)$ are forced by the solution, and
the simplicity and ordering of the positive zeros of $z_A$ determine a
unique index $j$ through $S=S_j(A)$.  Hence the pair $(A,j)$ is unique under
the normalization $u(0)>0$; without this normalization the only additional
freedom is the global symmetry $u\mapsto-u$.

The key estimate is the following uniform Fowler bound on bounded radial
intervals.
\begin{lemma}\label{lem:exchange}
For every $R>0$, a constant $C_{N,p,R}$ exists such that solutions of
\eqref{eq:helmholtzA} satisfy
\begin{equation}\label{eq:corebound}
 \sup_{A>0}\sup_{0<s\le R}
 s^\alpha\bigl(|z_A(s)|+s|z_A'(s)|\bigr)\le C_{N,p,R}.
\end{equation}
\end{lemma}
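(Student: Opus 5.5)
The plan is to pass to Fowler variables adapted to the supercritical exponent and to run the same energy--Gronwall argument as in Proposition~\ref{prop:quotient}. The key point is that the weight $s^\alpha$ in \eqref{eq:corebound} is exactly the one that makes the estimate independent of $A$. Set $t=\log s$ and $w(t)=e^{\alpha t}z_A(e^t)$. Using $\alpha p=\alpha+2$, a direct substitution into \eqref{eq:helmholtzA} gives
\[
 w''+\beta w'-\kappa w+e^{2t}w+|w|^{p-1}w=0,\qquad
 \beta=N-2-2\alpha,\quad \kappa=\alpha(N-2-\alpha).
\]
Since $0<\alpha<\frac{N-2}{2}$, both constants are positive, so $\beta w'$ is a genuine damping term. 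This is where supercriticality enters: it makes the damping term have the favourable sign.

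Next I introduce the non-autonomous energy
\[
 G(t)=\tfrac12 w'^2-\tfrac{\kappa}{2}w^2+\tfrac12e^{2t}w^2+\tfrac1{p+1}|w|^{p+1}.
\]
A direct computation gives $G'(t)=-\beta w'^2+e^{2t}w^2\le e^{2t}w^2$.

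I must check that $G(t)\to0$ as $t\to-\infty$. Since $z_A$ is $C^1$ up to $s=0$, we have $w=s^\alpha z_A\to0$ and $w'=s^\alpha(\alpha z_A+sz_A')\to0$ as $s\to0$. These limits are not uniform in $A$, but only the limit itself is needed. Integrating the inequality for $G'$ then yields
\[
 G(t)\le\int_{-\infty}^t e^{2\tau}w(\tau)^2\,d\tau .
\]

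The potential part $-\frac\kappa2 s^2+\frac1{p+1}|s|^{p+1}$ satisfies the coercive bound $\ge c|s|^{p+1}-C$ for constants $c,C$ depending only on $N,p$. The terms $\frac12 w'^2$ and $\frac12 e^{2t}w^2$ are nonnegative and can be discarded. Hence, exactly as in \eqref{eq:wH}, we get $w^2\le C(1+G)$ with $C=C_{N,p}$. Substituting this gives, for $t\le\log R$,
\[
 G(t)\le C\int_{-\infty}^t e^{2\tau}(1+G(\tau))\,d\tau .
\]
Gronwall's inequality in the variable $e^{2\tau}$ then gives $G(t)\le C e^{CR^2}$ for all $t\le \log R$, a bound independent of $A$.

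Once $G$ is bounded, $w$ is bounded by coercivity. Then $w'^2\le 2G+\kappa w^2$ shows that $w'$ is bounded as well. Translating back, we have $s^\alpha|z_A(s)|=|w|$ and $s^{\alpha}\,s|z_A'(s)|=|w'-\alpha w|$. This proves \eqref{eq:corebound}.

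The only real points of care are the following:
\begin{itemize}
\item the exact coefficients in the transformed equation (in particular that the nonlinearity loses all powers of $e^t$, which is the identity $\alpha p=\alpha+2$);
\item the sign of $\beta$;
\item the vanishing of $G$ at $-\infty$ for each fixed $A$.
\end{itemize}
Uniformity in $A$ then comes for free from Gronwall, because the data at $-\infty$ are zero for every $A$.
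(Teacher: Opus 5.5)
Your argument follows the paper's proof step for step. It uses the same Fowler change $w=s^\alpha z_A$ and the same energy with $G'=-\beta w'^2+e^{2t}w^2$, discards the damping term, and runs Gronwall from $t=-\infty$. That is uniform in $A$ because $G\to0$ there for each fixed $A$ and $e^{2t}$ is integrable at $-\infty$.

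One step is false as written. You cannot get $w^2\le C(1+G)$ ``exactly as in \eqref{eq:wH}'': there $H\ge0$, whereas here $G$ has no sign. The term $-\beta w'^2$ lowers $G$. For large $A$, $w$ stays near the singular level $\kappa^{1/(p-1)}$ over a long $t$-interval, where $G$ is close to $-\frac{p-1}{2(p+1)}\kappa^{\frac{p+1}{p-1}}$. This can be far below $-1$, and then $C(1+G)<0<w^2$.

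The repair is the shift the paper makes. Coercivity gives $G\ge c|w|^{p+1}-C_0$, so for a constant $K=K_{N,p}$ one has $G+K\ge1$ and $w^2\le C(G+K)$. Hence $G+K\le K+C\int_{-\infty}^t e^{2\tau}(G+K)\,d\tau$. Gronwall then yields $G+K\le K\exp(Ce^{2t}/2)$, which is the paper's $F_A=\mathcal H_A+K_R$. The rest of your proof goes through unchanged.
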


For the proof, set $t=\log s$ and $w_A(t)=s^\alpha z_A(s)$.  Then
\begin{equation}\label{eq:superfowler}
 w_A''+\beta w_A'-cw_A+e^{2t}w_A+|w_A|^{p-1}w_A=0,
 \qquad \beta=N-2-2\alpha>0,\quad c=\alpha(N-2-\alpha).
\end{equation}

\begin{proof}[Proof of Lemma~\ref{lem:exchange}]
We first establish the uniform estimate which is needed over a time interval
of length comparable with $\log A$.  Write $q=w_A'$ and $\eta=e^{2t}$, and set
\begin{equation}\label{eq:superenergy}
 \mathcal H_A(t)=\frac12q(t)^2-\frac c2w_A(t)^2+
 \frac{\eta(t)}2w_A(t)^2+\frac1{p+1}|w_A(t)|^{p+1}.
\end{equation}
Differentiation, using \eqref{eq:superfowler}, gives the exact identity
\begin{equation}\label{eq:superenergyderivative}
 \mathcal H_A'(t)=-\beta q(t)^2+\eta(t)w_A(t)^2.
\end{equation}
Fix $R>0$.  We make the coercivity used below explicit.  For
$0\leq\eta\leq R^2$, write
\[
 V_\eta(s)=-\frac c2s^2+\frac\eta2s^2+\frac1{p+1}|s|^{p+1}.
\]
Since $p+1>2$, Young's inequality gives
\[
 \frac c2s^2\leq \frac1{2(p+1)}|s|^{p+1}+C_{N,p}.
\]
The term $\frac{\eta s^2}{2}$ is nonnegative, and hence
\[
 V_\eta(s)\geq \frac1{2(p+1)}|s|^{p+1}-C_{N,p}
 ,\ 0\leq\eta\leq R^2 .
\]
Choose $K_R$ (in fact a constant depending only on $N,p$ would suffice)
so large that $F_A:=\mathcal H_A+K_R\geq1$.  Then
\[
 F_A\geq \frac12q^2+\frac1{2(p+1)}|w_A|^{p+1}+1.
\]
Since $w_A^2\leq C(1+|w_A|^{p+1})$, this proves that there is
$C_R>0$, independent of $A$ and $0\leq\eta\leq R^2$, such that
\begin{equation}\label{eq:supercoercivity}
 1+w_A^2+q^2+|w_A|^{p+1}\leq C_RF_A.
\end{equation}
Using \eqref{eq:superenergyderivative}, discarding the nonpositive term
$-\beta q^2$, and applying \eqref{eq:supercoercivity}, we obtain for
$t\leq\log R$,
\[
 F_A'(t)\leq e^{2t}w_A(t)^2\leq C_Re^{2t}F_A(t),
\]
that is,
\begin{equation}\label{eq:supergronwall}
 F_A'(t)\leq C_Re^{2t}F_A(t),\  t\leq\log R .
\end{equation}
For each fixed $A>0$, regularity at the origin gives
$z_A(s)=A+O_A(s^2)$ and $z_A'(s)=O_A(s)$ as $s\rightarrow0$; hence
$w_A(t),q(t)\to0$ as $t\to-\infty$ and therefore $F_A(t)\to K_R$.
No convergence uniform in $A$ is needed here.  Indeed, for an arbitrary but
fixed $A$, integration of \eqref{eq:supergronwall} from $T<t$ gives
\[
 F_A(t)\leq F_A(T)\exp\left(C_R\int_T^t e^{2\tau}\,d\tau\right).
\]
We now let $T\to-\infty$ for this fixed $A$.  Since the limiting value
$K_R$ and the coefficient $C_R$ are independent of $A$, the resulting
estimate is automatically uniform after the limit has been taken:
\[
 F_A(t)\leq K_R\exp\left(\frac{C_R}{2}e^{2t}\right)
 \leq K_R\exp\left(\frac{C_RR^2}{2}\right).
\]
Combining this estimate with \eqref{eq:supercoercivity} yields
\begin{equation}\label{eq:uniformfowlerbound}
 \sup_{A>0}\sup_{t\leq\log R}(|w_A(t)|+|w_A'(t)|)\leq C_{N,p,R}.
\end{equation}
The crucial point is that $e^{2t}$ is integrable at $-\infty$; hence the
estimate is uniform even though the transition time tends to $-\infty$ as
$A\to\infty$.

The estimate \eqref{eq:uniformfowlerbound} is the core input used below.
Finally,
\[
 s^\alpha z_A(s)=w_A(t),\qquad
 s^{\alpha+1}z_A'(s)=w_A'(t)-\alpha w_A(t),\qquad t=\log s.
\]
Thus \eqref{eq:uniformfowlerbound} implies \eqref{eq:corebound} for all
$A>0$.  The proof is complete.
\end{proof}

We next prove the two outer region facts used in the mass argument.
\begin{lemma}\label{lem:outerenergy}
There are $R_0,C>0$, depending only on $N,p$, such that
\begin{equation}\label{eq:outerdecay}
 |z_A(s)|+|z_A'(s)|\le Cs^{-\frac{N-1}{2}},\  s\ge R_0, A>0 .
\end{equation}
\end{lemma}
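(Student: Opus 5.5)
The plan is to use the Liouville-type substitution $\psi(s)=s^{m}z_A(s)$ with $m=\frac{N-1}{2}$, as in Lemma~\ref{lem:criticaloscillation}, and to obtain uniform control from an energy for $\psi$ started at a fixed radius $R_0$. The initial data at $R_0$ are controlled uniformly in $A$ by Lemma~\ref{lem:exchange}.

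\textbf{The equation for $\psi$.} A direct computation gives
\[
 \psi''+\Bigl(1-\frac{k}{s^2}\Bigr)\psi+s^{-m(p-1)}|\psi|^{p-1}\psi=0,
 \qquad k=\frac{(N-1)(N-3)}{4}.
\]
Fix $R_0\geq1$, depending only on $N$, with $1-k/s^2\geq\frac12$ for $s\geq R_0$. Define
\[
 E(s)=\frac12\psi'^2+\frac12\Bigl(1-\frac{k}{s^2}\Bigr)\psi^2+\frac{1}{p+1}s^{-m(p-1)}|\psi|^{p+1}.
\]
Then $E\geq\frac14(\psi^2+\psi'^2)$ on $[R_0,\infty)$. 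Differentiating,
\[
 E'(s)=\frac{k}{s^3}\psi^2-\frac{m(p-1)}{p+1}s^{-m(p-1)-1}|\psi|^{p+1}\leq \frac{4k}{s^3}E(s).
\]
Gronwall's inequality therefore yields
\[
 E(s)\leq E(R_0)\exp\Bigl(\frac{2k}{R_0^2}\Bigr)\leq C_N E(R_0),\qquad s\geq R_0.
\]
The constant here does not depend on $A$. This step is the same mechanism as in Lemma~\ref{lem:criticaloscillation}, but with explicit tracking of the constants.

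\textbf{Uniform bound on $E(R_0)$.} This is the step I expect to be the only real obstacle, since $z_A(0)=A$ is unbounded. I would apply Lemma~\ref{lem:exchange} with $R=R_0$. At $s=R_0$ it gives
\[
 |z_A(R_0)|\leq C_{N,p,R_0}R_0^{-\alpha},\qquad |z_A'(R_0)|\leq C_{N,p,R_0}R_0^{-\alpha-1},
\]
uniformly in $A>0$. Hence $\psi(R_0)=R_0^m z_A(R_0)$ and $\psi'(R_0)=mR_0^{m-1}z_A(R_0)+R_0^m z_A'(R_0)$ are bounded independently of $A$. It follows that $E(R_0)\leq C_{N,p}$, because $R_0$ depends only on $N$.

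\textbf{Conclusion.} From the energy bound, $|\psi|+|\psi'|\leq C$ on $[R_0,\infty)$ uniformly in $A$. Then $|z_A(s)|=s^{-m}|\psi(s)|\leq Cs^{-m}$. For the derivative we use
\[
 z_A'=s^{-m}\psi'-ms^{-m-1}\psi,
\]
which gives $|z_A'(s)|\leq C(1+m/R_0)s^{-m}$. This is \eqref{eq:outerdecay}. The uniformity in $A$ thus comes entirely from Lemma~\ref{lem:exchange}, while the outer Gronwall step uses only the integrability of $s^{-3}$ and the favorable sign of the nonlinear term in $E'$.
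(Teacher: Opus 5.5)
Your proposal is correct and is essentially the paper's proof. It uses the same substitution $\psi_A=s^{\frac{N-1}{2}}z_A$, the same energy with $E'\le Cs^{-3}E$ on $[R_0,\infty)$ (the nonlinear term having a favorable sign), Gronwall from a radius $R_0=R_0(N)$, and uniform control of $(\psi_A(R_0),\psi_A'(R_0))$ from Lemma~\ref{lem:exchange}, and your explicit constants ($E\ge\frac14(\psi^2+\psi'^2)$, $E'\le 4ks^{-3}E$, the factor $\exp(2k/R_0^2)$) all check out.
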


\begin{proof}
Set $m=\frac{N-1}{2}$, $a=m(p-1)>2$, and $\psi_A=s^mz_A$.  Then
\[
 \psi_A''+\left(1-\frac{k}{s^2}\right)\psi_A+
 s^{-a}|\psi_A|^{p-1}\psi_A=0,\qquad k=\frac{(N-1)(N-3)}4.
\]
Fix $R_0=R_0(N)\ge\max\{1,\sqrt{2k}\}$ so that $1-\frac{k}{s^2}\ge\frac{1}{2}$ for $s\ge R_0$.  Lemma~\ref{lem:exchange}
gives
\[
 |\psi_A(R_0)|+|\psi_A'(R_0)|
 \le C\bigl(R_0^m|z_A(R_0)|+R_0^m|z_A'(R_0)|
       +R_0^{m-1}|z_A(R_0)|\bigr)\le C,
\]
uniformly in $A$.  Thus the positive energy
\[
 E_A(s)=\frac12\psi_A'^2+\frac12\left(1-\frac{k}{s^2}\right)\psi_A^2
 +\frac1{p+1}s^{-a}|\psi_A|^{p+1}
\]
satisfies, by the choice of $R_0$,
\[
 E_A(s)\ge \frac12\psi_A'(s)^2+\frac14\psi_A(s)^2,
 \qquad s\ge R_0.
\]
Moreover,
\[
 E_A'(s)=\frac{k}{s^3}\psi_A^2-
 \frac{a}{p+1}s^{-a-1}|\psi_A|^{p+1}
 \le \frac{|k|}{s^3}\psi_A^2\le Cs^{-3}E_A(s).
\]
Gronwall gives $E_A(s)\le C$ for $s\ge R_0$.  Hence
$|\psi_A|+|\psi_A'|\le C$, and the identities
$z_A=s^{-m}\psi_A$ and $z_A'=s^{-m}\psi_A'-ms^{-m-1}\psi_A$
prove \eqref{eq:outerdecay}.
\end{proof}

\begin{lemma}\label{lem:superoscillation}
For every $A>0$, $z_A$ has infinitely many positive, simple zeros.
\end{lemma}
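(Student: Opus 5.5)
The plan is to copy the proof of Lemma~\ref{lem:criticaloscillation}, using Lemma~\ref{lem:outerenergy} (or, for fixed $A$, the same energy argument) for the a priori bound. Fix $A>0$ and put $m=\frac{N-1}{2}$, $k=\frac{(N-1)(N-3)}{4}$, $a=m(p-1)>2$ and $\psi_A=s^mz_A$. As recorded in the proof of Lemma~\ref{lem:outerenergy}, on $[R_0,\infty)$ the function $\psi_A$ solves
\[
 \psi_A''+\bigl(1+Q_A(s)\bigr)\psi_A=0,\qquad
 Q_A(s)=-\frac{k}{s^2}+s^{-a}|\psi_A(s)|^{p-1},
\]
and Lemma~\ref{lem:outerenergy} gives $|\psi_A|+|\psi_A'|\le C$ for $s\ge R_0$. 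Hence $|Q_A(s)|\le C(s^{-2}+s^{-a})$, which is integrable on $(R_0,\infty)$ because $a>2$.

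Next, $(\psi_A,\psi_A')$ never vanishes simultaneously for $s>0$. Indeed, $\psi_A(s_0)=\psi_A'(s_0)=0$ would give $z_A(s_0)=z_A'(s_0)=0$. Uniqueness for the radial ODE away from $s=0$ would then force $z_A\equiv0$, contradicting $z_A(0)=A>0$. So we may introduce Pr\"ufer variables $\psi_A=\varrho\sin\theta$, $\psi_A'=\varrho\cos\theta$ with $\varrho>0$ and a continuous choice of $\theta$ on $[R_0,\infty)$. Differentiating these identities gives the exact phase equation
\[
 \theta'=1+Q_A(s)\sin^2\theta .
\]
It follows that $|\theta'-1|\le C(s^{-2}+s^{-a})$, so $\theta(s)=s+O_A(1)\to+\infty$. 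Each time $\theta$ crosses a multiple of $\pi$, $\psi_A$ vanishes, so $z_A$ has infinitely many zeros in $[R_0,\infty)$. Simplicity follows from the same uniqueness remark: at a zero, $z_A'\neq0$.

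The only delicate point is justifying uniqueness for the regular initial value problem, which is needed to exclude a double zero. Away from the origin the nonlinearity $|z|^{p-1}z$ is $C^1$ since $p>1$, so standard Cauchy--Lipschitz theory applies on $(0,\infty)$ and propagates $z_A\equiv0$ down to any $s>0$. This contradicts $z_A(s)\to A$ as $s\to0$. The remaining steps are routine given Lemma~\ref{lem:outerenergy}.
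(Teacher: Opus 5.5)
Your proposal is correct and follows essentially the same route as the paper. You pass to $\psi_A=s^{\frac{N-1}{2}}z_A$, take the boundedness of $(\psi_A,\psi_A')$ on $[R_0,\infty)$ from Lemma~\ref{lem:outerenergy}, and use the Pr\"ufer phase equation to get $|\theta'-1|\le C(s^{-2}+s^{-a})\in L^1(R_0,\infty)$, hence $\theta\to+\infty$ and infinitely many zeros; simplicity then comes from uniqueness for the ODE on $(0,\infty)$. Your side remark that a fixed-$A$, non-uniform energy bound would already suffice is also right, since uniformity in $A$ plays no role in this lemma.
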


\begin{proof}
With $\psi_A$ as above, write
$\psi_A=\varrho\sin\theta$, $\psi_A'=\varrho\cos\theta$.
The vector $(\psi_A,\psi_A')$ never vanishes, by uniqueness, so these Pr$\mathrm{\ddot{u}}$fer
coordinates are globally defined after a continuous choice of $\theta$.
The energy bound in Lemma~\ref{lem:outerenergy} makes the perturbation in
\[
 \psi_A''+\psi_A=\frac{k}{s^2}\psi_A-
 s^{-a}|\psi_A|^{p-1}\psi_A
\]
integrable in Pr$\mathrm{\ddot{u}}$fer coordinates.  For completeness, differentiating
\(
 \psi_A=\varrho\sin\theta,
 \ \psi_A'=\varrho\cos\theta
\)
and eliminating $\varrho'$ gives
\[
 \theta'(s)
 =1-\frac{k}{s^2}\sin^2\theta(s)
 +s^{-a}|\psi_A(s)|^{p-1}\sin^2\theta(s).
\]
Equivalently, since $|\psi_A|^{p-1}=\varrho^{p-1}
|\sin\theta|^{p-1}$, the last term is
$ s^{-a}\varrho^{p-1}|\sin\theta|^{p-1}\sin^2\theta$.
By Lemma~\ref{lem:outerenergy}, both $\psi_A$ and $\psi_A'$ are uniformly
bounded for $s\ge R_0$, hence so is $\varrho$.  Therefore
\[
 |\theta'(s)-1|\le C(s^{-2}+s^{-a}).
\]
Since $a=m(p-1)>2$, the right-hand side belongs to
$L^1(R_0,\infty)$.  Integration yields
$\theta(s)=s+O(1)$ and in particular $\theta(s)\to+\infty$.  The zeros are therefore
infinite in number.  Simplicity follows from ODE uniqueness, exactly as in
Lemma~\ref{lem:criticaloscillation}.
\end{proof}

The radial energy
\[
 \frac{d}{ds}\left(\frac12 z_A'(s)^2+\frac12z_A(s)^2+
 \frac1{p+1}|z_A(s)|^{p+1}\right)
 =-\frac{N-1}{s}z_A'(s)^2\le0
\]
implies $|z_A(s)|\le A$.  On $(0,R_0]$,
Lemma~\ref{lem:exchange} gives $|z_A(s)|\le Cs^{-\alpha}$; on
$[R_0,\infty)$, Lemma~\ref{lem:outerenergy} gives the same estimate because
$\frac{N-1}{2}>\alpha$.  Consequently,
\begin{equation}\label{eq:superenvelope}
 |z_A(s)|\le C_{N,p}\min\{A,s^{-\alpha}\},\qquad s>0.
\end{equation}

Using \eqref{eq:superenvelope}, put $\ell_A=A^{-\frac{1}{\alpha}}$.  If
$S\le\ell_A$, then
\[
 \int_0^S z_A(s)^2s^{N-1}\,ds
 \le A^2\int_0^S s^{N-1}\,ds
 \le C S^{N-2\alpha},
\]
because $AS^\alpha\le1$.  If $S>\ell_A$, split the integral at
$\ell_A$ to obtain
\[
\begin{aligned}
 \int_0^S z_A(s)^2s^{N-1}\,ds
 &\le C\left(A^2\ell_A^N+
 \int_{\ell_A}^S s^{N-1-2\alpha}\,ds\right)\\
 &\le C_{N,p}S^{N-2\alpha}.
\end{aligned}
\]
Thus, for every $S>0$,
\begin{equation}\label{eq:superintegral}
 \int_0^S z_A(s)^2s^{N-1}\,ds\le C_{N,p}S^{N-2\alpha}.
\end{equation}
Consequently $\mathcal M_j(A)\le C_{N,p}$ uniformly in $A$ and $j$.
Lemma~\ref{lem:superoscillation} gives all positive simple zeros.  We
record explicitly the parameter dependence needed below.  Fix $A_0>0$
and $j\ge1$.  Continuous dependence for the regular radial initial value
problem implies that $(A,s)\mapsto z_A(s)$ and
$(A,s)\mapsto z_A'(s)$ are continuous on compact subsets of
$(0,\infty)\times[0,\infty)$.  Since
\[
 z_{A_0}(S_j(A_0))=0,\qquad
 z_{A_0}'(S_j(A_0))\ne0,
\]
the zero persists uniquely for $A$ near $A_0$; equivalently, the
implicit function theorem applied away from the regular singular point
$s=0$ gives a continuous (indeed locally $C^1$) zero branch through
$S_j(A_0)$.  The simplicity of all positive zeros prevents two ordered
zero branches from meeting.  Hence this local branch remains the $j$th
positive zero, and $A\mapsto S_j(A)$ is continuous on $(0,\infty)$.
Formula~\eqref{eq:supermass}, together with continuous dependence of
$z_A$ on compact intervals, then shows that
$A\mapsto\mathcal M_j(A)$ is continuous as well.

Finally, put $y_A=\frac{z_A}{A}$.  Then
\[
 y_A''+\frac{N-1}{s}y_A'+y_A
 +A^{p-1}|y_A|^{p-1}y_A=0,
 \qquad y_A(0)=1,\quad y_A'(0)=0.
\]
We give the short compactness argument at the singular endpoint.  The
radial energy is nonincreasing, and after division by $A^2$ it yields,
for $0<A\le1$,
\[
 |y_A(s)|\le1,\  s\ge0 .
\]
Writing the equation in divergence form gives
\[
 y_A'(s)=-s^{1-N}\int_0^s \tau^{N-1}
 \bigl(y_A(\tau)+A^{p-1}|y_A(\tau)|^{p-1}y_A(\tau)\bigr)\,d\tau.
\]
Thus, for every $L>0$,
\[
 |y_A'(s)|\le C_Ls,\ 0\le s\le L,\ 0<A\le1 ,
\]
and the families $\{y_A\}$ and $\{y_A'\}$ are uniformly bounded and
equicontinuous on $[0,L]$ (for the derivative, use the equation on
$[\delta,L]$ and the preceding integral formula near $0$).  Hence every
sequence $A_n\rightarrow0$ has a subsequence converging in
$C^1([0,L])$.  Passing to the limit in the integral identity shows that
any such limit solves
\[
 J''+\frac{N-1}{s}J'+J=0,\qquad J(0)=1,\quad J'(0)=0.
\]
The regular solution is unique.  Since $L$ is arbitrary and every
subsequence has the same limit,
\[
 y_A\longrightarrow J\qquad\text{in }C^1_{\rm loc}([0,\infty),\ A\rightarrow0 .
\]
Since the $j$th positive zero $\zeta_j$ of $J$ is simple, choose disjoint
small intervals around $\zeta_1,\ldots,\zeta_j$ on which $J'$ does not
vanish.  The $C^1$ convergence gives exactly one zero of $y_A$ in each
of these intervals for all sufficiently small $A$, while uniform
convergence excludes additional zeros on the complementary compact set.
Consequently $S_j(A)\to\zeta_j$.  Therefore
\eqref{eq:supermass} gives
\[
 \mathcal M_j(A)=|\Sph^{N-1}|\zeta_j^{2\alpha-N}
 A^2\int_0^{\zeta_j}J(s)^2s^{N-1}\,ds+o(A^2).
\]
Hence for every fixed $j$,
\begin{equation}\label{eq:superbessel}
 \mathcal M_j(A)\longrightarrow0,\ A\rightarrow0 .
\end{equation}

\begin{proof}[Proof of Theorem~\ref{thm:superradial}]
Set $\rho_{\rm rad}^*(p)=\sup\limits_{A>0,\,j\ge1}\mathcal M_j(A)$.  It is finite
by \eqref{eq:superintegral} and positive because every $z_A$ has a positive
simple zero and hence yields a strictly positive mass.  Let $0<\rho<\rho_{\rm rad}^*(p)$.  By the definition of the supremum,
there exist $j\ge1$ and $A_0>0$ such that
$\mathcal M_j(A_0)>\rho$.  By \eqref{eq:superbessel}, for the same fixed
$j$ one can choose $A_1\in(0,A_0)$ sufficiently small so that
$\mathcal M_j(A_1)<\rho$.  Since $A\mapsto\mathcal M_j(A)$ is continuous,
the intermediate value theorem yields some $A_*\in(A_1,A_0)$ satisfying
$\mathcal M_j(A_*)=\rho$.  Hence
$(0,\rho_{\rm rad}^*(p))\subset\mathcal R_{\rm rad}(p)$, while the
reverse inclusion in $(0,\rho_{\rm rad}^*(p)]$ follows from the definition
of the supremum.  The two alternatives stated in Theorem~\ref{thm:superradial} are exactly
the two possible endpoint behaviors.
\end{proof}
\textbf{Data availability.} No data were generated or analyzed in this study.

\textbf{Conflict of interest.} The authors declare that they have no conflict of interest.

\end{document}